\newtheorem{theorem}{Theorem}[section]
\newtheorem{lemma}[theorem]{Lemma}
\newtheorem{proposition}[theorem]{Proposition}
\newtheorem{corollary}[theorem]{Corollary}
\newtheorem{conjecture}[theorem]{Conjecture}
\newtheorem*{Euler}{Euler's Formula}
\theoremstyle{definition}
\newtheorem{example}[theorem]{Example}
\theoremstyle{remark}
\newtheorem{remark}[theorem]{Remark}
\begin{document}

\title[New upper bounds for the bondage number of a graph]{New upper bounds for the bondage number of a graph in terms of its maximum degree and Euler characteristic}

\author{Jia Huang}
\address{Department of Mathematics and Statistics \\ University of Nebraska at Kearney \\ Kearney, NE 68849, USA}
%\curraddr{}
\email{huangj2@unk.edu}
%\thanks{}

\author{Jian Shen}
\address{Department of Mathematics \\ Texas State University \\ San Marcos, TX 78666, USA}
\email{js48@txstate.edu}

\thanks{The authors thank the anonymous referee for providing helpful suggestions.}

\keywords{Bondage number, graph embedding, genus, Euler characteristic, Euler's formula, girth, order, size}
\maketitle

\begin{abstract}
The bondage number $b(G)$ of a graph $G$ is the smallest number of edges whose removal from $G$ results in a graph with larger domination number. Let $G$ be embeddable on a surface whose Euler characteristic $\chi$ is as large as possible, and assume $\chi\leq0$. Gagarin--Zverovich and Huang have recently found upper bounds of $b(G)$ in terms of the maximum degree $\Delta(G)$ and the Euler characteristic $\chi$. In this paper we prove a better upper bound $b(G)\leq\Delta(G)+\lfloor t\rfloor$ where $t$ is the largest real root of the cubic equation $z^3 + z^2 + (3\chi - 8)z  + 9\chi - 12=0$; this upper bound is  asymptotically equivalent to $b(G)\leq\Delta(G)+1+\lfloor \sqrt{4-3\chi} \rfloor$. We also establish further improved upper bounds for $b(G)$ when the girth, order, or size of the graph $G$ is large compared with $|\chi|$.
\end{abstract}

\section{Introduction}

The graphs considered in this paper are all finite, undirected, and without loops or multiple edges. Let $G$ be a graph with vertex set $V(G)$ and edge set $E(G)$. The \emph{order} and \emph{size} of $G$ are $|V(G)|$ and $|E(G)|$, respectively. A graph with order $1$ is called \emph{trivial} and a graph with size $0$ is called \emph{empty}. The \emph{degree} $d(v)$ of a vertex $v$ in $G$ is the cardinality $|N(v)|$ of the set $N(v)$ of all neighbors of $v$ in $G$. The maximum and minimum vertex degrees
of $G$ are denoted by $\Delta(G)$ and $\delta(G)$.

A \emph{dominating set} of a graph $G$ is a subset $D\subseteq V$ of vertices such that every vertex not in $D$ is adjacent to at least one vertex in $D$. The minimum cardinality of a dominating set is called the \emph{domination number} of the graph $G$. The concept of domination in graphs has many applications in a wide range of areas. % within the natural and social sciences.

The \emph{bondage number} $b(G)$ of a graph $G$, introduced in \cite{BHNS,FJKR}, is the smallest number of edges whose removal from $G$ results in a graph with larger domination number. It measures to some extent the reliability of the domination number of the graph $G$, as edge removal from $G$ corresponds to link failure in a communication network whose underlying structure is given by the graph $G$. One can check that the bondage number $b(G)$ is well defined for any nonempty graph $G$.

In general it is NP-hard to determine the bondage number $b(G)$ (see Hu and Xu~\cite{HuXu}). There have been studies on its upper and lower bounds, such as the following results.

\begin{lemma}[Hartnell and Rall~\cite{HR1}]\label{lem:HR}
For any edge $uv$ in a graph $G$ one has
\[
b(G) \leq d(u) + d(v)-1-|N(u) \cap N(v)|.
\]
In particular, $b(G)\leq \Delta(G) + \delta(G)-1$.
\end{lemma}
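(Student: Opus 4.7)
The plan is to exhibit an explicit set $F\subseteq E(G)$ of size $k := d(u)+d(v)-1-|N(u)\cap N(v)|$ whose removal strictly increases the domination number $\gamma$. The candidate I would take is
\[
F = \{ux : x\in N(u)\} \;\cup\; \{vy : y\in N(v)\setminus N[u]\},
\]
where $N[u]=N(u)\cup\{u\}$. A short count, using $u\in N(v)$ to separate $u$ from the common neighbors $N(u)\cap N(v)$, verifies $|F|=k$: the first set contributes $d(u)$ edges and the second contributes $d(v)-1-|N(u)\cap N(v)|$ edges, and the two are disjoint. In $G-F$ the vertex $u$ becomes isolated, $v$'s surviving neighbors are exactly $N(u)\cap N(v)$, and every other edge of $G$ is preserved.

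To prove $\gamma(G-F)>\gamma(G)$ I would argue by contradiction. Suppose $D$ is a dominating set of $G-F$ with $|D|\le\gamma(G)$. Since $E(G-F)\subseteq E(G)$, the same $D$ dominates $G$, forcing $|D|=\gamma(G)$; and the isolation of $u$ forces $u\in D$. The goal is then to show that $D^*:=D\setminus\{u\}$ already dominates $G$, contradicting $|D^*|=\gamma(G)-1$. Every vertex of $V\setminus D$ has a neighbor in $D$ within $G-F$; this neighbor cannot be the isolated $u$, so it lies in $D^*$. To dominate $u$ itself from $D^*$ I would split on whether $v\in D$: if $v\in D$ then $v\in D^*\cap N(u)$; if $v\notin D$ then $D$ must dominate $v$ in $G-F$ through one of $v$'s only remaining neighbors, which by construction lies in $N(u)\cap N(v)\subseteq N(u)$, and such a vertex also sits in $D^*$. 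Either way, $D^*$ dominates $G$, the desired contradiction.

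The main obstacle I expect is committing to the correct $F$: it needs to both isolate $u$ (so that any new dominating set is forced to contain $u$) and strip $v$ of every neighbor outside $N[u]$ (so that the subcase $v\notin D$ is guaranteed to deliver a common neighbor of $u$ and $v$ inside $D^*$, which is what lets us dispense with $u$). Once this $F$ is fixed, the argument is essentially bookkeeping. The ``in particular'' estimate follows by taking $u$ of minimum degree and $v$ any neighbor of $u$, since then $|N(u)\cap N(v)|\ge 0$ and $d(v)\le\Delta(G)$.
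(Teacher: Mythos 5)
The paper states this lemma only as a citation to Hartnell and Rall and gives no proof of its own; your argument is correct and is essentially the classical one: delete all edges at $u$ together with the edges from $v$ to $N(v)\setminus N[u]$, verify the count $d(u)+d(v)-1-|N(u)\cap N(v)|$, and show that any dominating set of the new graph of size $\gamma(G)$ must contain the isolated vertex $u$, which could then be discarded because either $v$ or a common neighbor in $N(u)\cap N(v)$ lies in the set --- a contradiction. The only caveat is the ``in particular'' clause, which (as in the original statement) implicitly requires $\delta(G)\ge 1$ so that a minimum-degree vertex actually has a neighbor; for a graph with an isolated vertex the bound $\Delta(G)+\delta(G)-1$ can fail, but this is inherited from the statement rather than a flaw in your argument.
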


\begin{theorem}[Hartnell and Rall~\cite{HR2}]\label{thm:ad}
For any connected graph $G$ one has a sharp bound $|E(G)| \geq |V(G)|(b(G) + 1)/4$.
\end{theorem}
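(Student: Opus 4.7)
My strategy is to lift the edge-local inequality of Lemma~\ref{lem:HR} to a global bound on $|E(G)|$ by an averaging argument. Discarding the codegree correction, Lemma~\ref{lem:HR} gives $d(u)+d(v)\geq b(G)+1$ for every edge $uv\in E(G)$. I want to sum this over the edges with clever weights so that the left-hand side collapses into $\sum_v d(v)=2|E(G)|$ while the right-hand side accumulates to $|V(G)|(b(G)+1)/2$; a rearrangement will then yield the desired $4|E(G)|\geq |V(G)|(b(G)+1)$.

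The cleanest realization is via a \emph{fractional perfect matching}, i.e.\ nonnegative edge weights $w_e$ with $\sum_{e\ni v}w_e=1$ for every $v\in V(G)$. Then $\sum_e w_e=|V(G)|/2$ automatically, and
\[
2|E(G)|=\sum_v d(v)\sum_{e\ni v}w_e=\sum_{e=uv}w_e\bigl(d(u)+d(v)\bigr)\geq (b(G)+1)\sum_e w_e=\frac{|V(G)|(b(G)+1)}{2},
\]
completing the proof whenever such weights exist.

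The main obstacle is that not every connected graph admits a fractional perfect matching; the star $K_{1,n}$, whose fractional matching number is $1$, is the prototypical failure. I would cover these leftover graphs by induction on $|V(G)|$, using the Tutte--Berge structure theorem to locate an independent set $S\subseteq V(G)$ whose removal leaves strictly more than $|S|$ odd components. In this regime $G$ is forced to look treelike, so $b(G)$ is small (for trees, at most $3$), and the required inequality follows from the trivial estimate $|E(G)|\geq |V(G)|-1$ (from connectedness) combined with the a priori upper bound on $b$. The delicate technical point is that deleting a vertex or edge can change $b(G)$ discontinuously, so I would phrase the induction in terms of the stronger surrogate $d(u)+d(v)\geq b(G)+1$ rather than tracking $b(G)$ itself through the recursion. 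Finally, sharpness is witnessed by the $7$-cycle: $b(C_7)=3$ and $|V(C_7)|=|E(C_7)|=7$, so the bound reads $28\geq 28$.
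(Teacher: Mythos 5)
The paper does not prove this statement at all---it is quoted from Hartnell and Rall \cite{HR2}---so your proposal can only be judged on its own merits, and as it stands it has a genuine gap. The fractional-perfect-matching half is fine: if weights $w_e\geq 0$ with $\sum_{e\ni v}w_e=1$ for all $v$ exist, your double counting together with $d(u)+d(v)\geq b(G)+1$ (Lemma~\ref{lem:HR} with the codegree term dropped) does give $2|E(G)|\geq (b(G)+1)|V(G)|/2$. The problem is the leftover case, which is exactly where the content of the theorem lies, and your treatment of it is both unproved and based on false claims. Graphs with no fractional perfect matching are not ``forced to look treelike'': attach two pendant vertices to one vertex of a large clique and you get a dense, highly non-treelike connected graph with deficiency (your cited $b(T)\leq 3$ for trees is also off---trees satisfy $b\leq 2$---but that is minor).

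More fundamentally, your plan to run the induction ``in terms of the stronger surrogate $d(u)+d(v)\geq b(G)+1$'' cannot work, because that surrogate is strictly weaker than the theorem precisely in the deficient case: for the star $K_{1,4}$ every edge satisfies $d(u)+d(v)=5$, yet $4|E|/|V|=16/5<5$, so the implication ``$d(u)+d(v)\geq b+1$ for all edges $\Rightarrow 4|E|\geq |V|(b+1)$'' is simply false as a formal statement about graphs. In other words, whenever the fractional perfect matching is missing you must bring in genuine information about the bondage number itself (or at least the codegree correction $|N(u)\cap N(v)|$ that you discarded), e.g.\ by showing directly that deficient configurations force $b(G)$ to be small; Hartnell and Rall's actual argument does real work at exactly this point. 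Until that case is handled honestly, the proof is incomplete. Your sharpness witness $C_7$ (where $b=3$ and $|E|=|V|=7$) is correct.
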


The \emph{average degree} of a graph $G$ is defined as $ad(G):=2|E(G)|/|V(G)|$. It follows from Lemma~\ref{lem:HR} and Theorem~\ref{thm:ad} that $b(G)\leq b'(G)$ where $b'(G)$ is an integer defined as
\begin{equation}\label{eq:b'}
b'(G) = \min\left\{ \min_{uv\in E(G)} d(u) + d(v)-1-|N(u) \cap N(v)|,\ 2 \lfloor ad(G) \rfloor-1 \right\}.
\end{equation}
Any upper bound for $b'(G)$ is automatically an upper bound for the bondage number $b(G)$.

There are two conjectures on upper bounds of $b(G)$, which are still open.

\begin{conjecture}[Teschner~\cite{Teschner}]\label{conj1}
For any graph $G$ we have $b(G)\leq \frac32 \Delta(G)$.
\end{conjecture}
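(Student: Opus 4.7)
I should flag at the outset that this is Teschner's long-standing open conjecture, so any plan is necessarily speculative. The natural first move is to reduce to the high-minimum-degree regime: Lemma~\ref{lem:HR} gives $b(G)\leq \Delta(G)+\delta(G)-1$, which already yields $b(G)\leq \tfrac32\Delta(G)$ whenever $\delta(G)\leq \lfloor \tfrac12\Delta(G)\rfloor +1$. So one may assume $\delta(G)>\tfrac12\Delta(G)+1$, meaning $G$ is close to $\Delta$-regular.

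In this regime I would try to exploit the refined edge-local form of Lemma~\ref{lem:HR}. It suffices to exhibit a single edge $uv$ with
\[
|N(u)\cap N(v)|\ \geq\ d(u)+d(v)-1-\lfloor \tfrac32\Delta(G)\rfloor,
\]
and in the worst case $d(u)=d(v)=\Delta(G)$ this demands an edge lying on roughly $\tfrac12\Delta(G)$ triangles. Averaging over edges, the mean of $|N(u)\cap N(v)|$ equals $3t(G)/|E(G)|$, where $t(G)$ counts triangles, so the task reduces to a triangle-counting problem under the hypothesis $\delta(G)>\tfrac12\Delta(G)+1$. A Moon--Moser-type estimate, or a direct Kruskal--Katona bound, should supply enough triangles in the locally dense case; one could then combine with $b(G)\leq 2\lfloor ad(G)\rfloor-1$ from \eqref{eq:b'} to mop up the remaining degree configurations.

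The main obstacle, and the reason the conjecture has resisted proof, is that near-regular triangle-free graphs---bipartite graphs with $\delta\approx\Delta$, or graphs of large girth---have $|N(u)\cap N(v)|=0$ on every edge, so both Lemma~\ref{lem:HR} and any triangle-counting route deliver only $b(G)\leq 2\Delta(G)-1$. Closing the remaining gap to $\tfrac32\Delta(G)$ would require a genuinely new structural argument: perhaps identifying an edge whose removal must displace a vertex from every minimum dominating set by exploiting bipartite structure or the absence of $C_4$'s, or a global coloring/matching argument that avoids the local Hartnell--Rall inequality altogether. I would expect this triangle-free near-regular case to be where any successful proof must do its real work.
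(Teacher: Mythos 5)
The statement you were asked about is not a theorem of the paper at all: it is Teschner's conjecture, which the paper explicitly records as still open and never attempts to prove (it only proves weaker bounds of the form $b(G)\leq\Delta(G)+O(\sqrt{-\chi})$ via embeddings and the curvature identity~(\ref{eq:weight})). Your proposal, as you yourself acknowledge, is a plan of attack rather than a proof, so there is a genuine gap by your own admission: the near-regular triangle-free case is left completely open, and that is exactly the regime where every tool you invoke collapses. For a $\Delta$-regular triangle-free graph (e.g.\ $K_{n,n}$, or regular graphs of girth at least $5$), Lemma~\ref{lem:HR} gives only $b(G)\leq 2\Delta(G)-1$, the average-degree bound $2\lfloor ad(G)\rfloor-1$ from~(\ref{eq:b'}) gives the same $2\Delta(G)-1$, and no triangle-counting argument can produce a common neighbor on any edge. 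Nothing in your sketch closes the gap between $2\Delta-1$ and $\tfrac32\Delta$ there.

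A secondary problem is the step where you claim a Moon--Moser or Kruskal--Katona estimate ``should supply enough triangles'' once $\delta(G)>\tfrac12\Delta(G)+1$. Density-type triangle theorems require minimum degree large relative to the \emph{order} $n$ (Mantel/Moon--Moser need $\delta>n/2$), not relative to $\Delta$; the hypothesis $\delta>\tfrac12\Delta+1$ is compatible with complete absence of triangles (again $K_{n,n}$, or any regular graph of large girth), so even the ``locally dense'' branch of your case analysis does not go through as written. The only fully correct part of the proposal is the easy reduction via $b(G)\leq\Delta(G)+\delta(G)-1$ to the case $\delta(G)>\tfrac12\Delta(G)+1$, which is well known and is far short of the conjecture; the paper itself only makes partial progress toward Conjecture~\ref{conj1} through the Euler-characteristic bounds of Theorems~\ref{thm1}, \ref{thm:girth}, \ref{thm:order} and~\ref{thm4}, not through any argument of the kind you outline.
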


\begin{conjecture}[Dunbar-Haynes-Teschner-Volkmann~\cite{DHTV}]
\label{conj2}
If $G$ is a planar graph then we have $b(G)\leq \Delta(G)+1$.
\end{conjecture}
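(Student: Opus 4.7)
The statement in question is a well-known open conjecture, so any proposal is necessarily speculative, but the natural plan is to combine Lemma~\ref{lem:HR} with Euler's formula for planar graphs. Euler's formula yields $|E(G)|\leq 3|V(G)|-6$ and hence both $\delta(G)\leq 5$ and $ad(G)<6$. The strategy is to locate an edge $uv$ whose endpoints satisfy $d(u)+d(v)-1-|N(u)\cap N(v)|\leq \Delta(G)+1$ and then invoke Lemma~\ref{lem:HR}. A useful preliminary observation is that $2\lfloor ad(G)\rfloor-1\leq 9$, so the second term of~\eqref{eq:b'} already gives $b(G)\leq 9$; hence the conjecture is automatic once $\Delta(G)\geq 8$, and the difficulty is really concentrated in the small-$\Delta$ regime.

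I would first dispose of the small-$\delta$ cases. If $\delta(G)\leq 2$, picking $v$ of minimum degree and any neighbor $u$ makes Lemma~\ref{lem:HR} give $b(G)\leq \Delta(G)+\delta(G)-1\leq \Delta(G)+1$. When $\delta(G)=3$, the goal reduces to finding a degree-$3$ vertex lying on a triangle (so that $|N(u)\cap N(v)|\geq 1$ for some incident edge), a mild local condition that should follow from a short planarity count. For $\delta(G)\in\{4,5\}$ one needs an edge $uv$ with $d(v)=\delta(G)$ and $|N(u)\cap N(v)|\geq d(v)-2$, i.e.\ almost all of $v$'s neighbors must also be neighbors of $u$; this is where genuine planar structure must be exploited, probably through the triangular faces incident to $v$ in a fixed planar embedding.

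The main obstacle, and the reason the conjecture remains open, is the case $\delta(G)=5$ (with $\delta(G)=4$ a close second). The natural tool is a discharging argument: assign each vertex $v$ the initial charge $d(v)-6$, so that Euler's formula makes the total charge equal to $-12$; redistribute charge across edges according to rules designed so that a vertex without an incident edge of large common neighborhood retains nonnegative final charge, and derive a contradiction. In parallel, one can try to strengthen Hartnell--Rall itself by using the local cyclic order of neighbors around a degree-$5$ vertex in the embedding to force multiple triangles and hence several common neighbors.

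I expect the discharging step for $\delta(G)=5$ to be the decisive difficulty: the reducible configurations one needs to forbid are configurations of $5$-regular triangulated patches that a priori admit no edge with three common neighbors, and every published partial result falls short by a small additive constant (bounds of the shape $\Delta(G)+c$ with $c\geq 2$, of exactly the kind the present paper produces for general surfaces). Closing the last gap appears to require either a refined structural theorem about local neighborhoods in planar triangulations of minimum degree $5$, or an argument that bypasses Lemma~\ref{lem:HR} altogether by attacking the domination number directly after edge removal.
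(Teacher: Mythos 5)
You have not proved the statement, and neither does the paper: Conjecture~\ref{conj2} is quoted there explicitly as one of two conjectures that are ``still open,'' and the paper's own machinery only yields $b(G)\leq\Delta(G)+2$ for planar graphs (Theorem~\ref{thm:GZ} / Kang--Yuan). Your text is a research program rather than a proof, and you concede as much; so the honest verdict is that there is a genuine gap, namely the entire $\delta(G)\in\{3,4,5\}$ analysis, which is exactly where the conjecture has resisted proof.

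Beyond the admitted incompleteness, two of your intermediate claims are concretely wrong or doomed. First, for $\delta(G)=3$ you assert that a degree-$3$ vertex lying on a triangle ``should follow from a short planarity count''; it does not: the cube and the dodecahedron are planar with $\delta=3$ and girth $4$ and $5$ respectively, so no degree-$3$ vertex lies on a triangle, and Lemma~\ref{lem:HR} gives only $\Delta+2$ there. Second, and more fundamentally, any argument routed entirely through $b'(G)$ as defined in \eqref{eq:b'} (Hartnell--Rall plus the average-degree bound) cannot in principle reach $\Delta(G)+1$: for the icosahedron every edge has $d(u)=d(v)=5$ and $|N(u)\cap N(v)|=2$, so $\min_{uv}\bigl(d(u)+d(v)-1-|N(u)\cap N(v)|\bigr)=7$, while $2\lfloor ad(G)\rfloor-1=9$, hence $b'(G)=7=\Delta(G)+2>\Delta(G)+1$. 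Thus your discharging scheme, whose only exit is to produce an edge with small Hartnell--Rall value, has an unavoidable reducible-configuration failure (5-regular triangulations in which every edge has exactly two common neighbors), and closing the conjecture would indeed require attacking the domination number after edge removal directly, as you suggest at the end --- but that step is precisely what is missing.
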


The best upper bound known so far for the bondage number of a planar graph is the following.

\begin{theorem}[Kang and Yuan~\cite{KangYuan}]
For any planar graph $G$, $b(G)\leq\min\{\Delta(G)+2,8\}$.
\end{theorem}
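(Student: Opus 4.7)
The plan is to establish both $b(G)\leq\Delta(G)+2$ and $b(G)\leq 8$ by locating, in any planar graph $G$, an edge $uv$ for which the Hartnell--Rall quantity $d(u)+d(v)-1-|N(u)\cap N(v)|$ is small; both inequalities then follow from Lemma~\ref{lem:HR}, or equivalently from $b(G)\leq b'(G)$ in \eqref{eq:b'}. As a warm-up, note that the easy half of the second bound drops out from the average degree: planarity yields $|E(G)|\leq 3|V(G)|-6$, so $ad(G)<6$ and $2\lfloor ad(G)\rfloor-1\leq 9$, which is only one unit short of the target.

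For $b(G)\leq\Delta(G)+2$ I would split on $\delta(G)$, which planarity forces to be at most $5$. If $\delta(G)\leq 3$, any edge $uv$ with $d(u)=\delta(G)$ gives the bound directly from Lemma~\ref{lem:HR}. The cases $\delta(G)\in\{4,5\}$ are the substantive ones, and here I would run a discharging argument on a fixed planar embedding: assign each vertex $x$ the initial charge $d(x)-6$ and each face $f$ of length $\ell(f)$ the initial charge $2\ell(f)-6$, so the total charge is $-12$ by Euler's formula. After redistributing charge from high-degree vertices and long faces toward low-degree vertices, the fact that the total stays negative forces the existence of a \emph{light} edge $uv$ satisfying $d(u)+d(v)-|N(u)\cap N(v)|\leq\Delta(G)+3$. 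To trim the $9$ in the preceding paragraph down to $8$, I would argue in parallel that when $ad(G)\geq 5$ there must be an edge $uv$ lying on a triangle with $d(u)+d(v)\leq 10$; the extra common neighbor then shaves off the needed unit. The sub-case $ad(G)\leq 4$ already gives $2\lfloor ad(G)\rfloor-1\leq 7<8$ from \eqref{eq:b'} with no further work.

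The main obstacle is designing the discharging rules so that the resulting light edge simultaneously controls $d(u)+d(v)$ and $|N(u)\cap N(v)|$. In particular, for $\delta(G)=5$ one must analyze how the neighbors of a $5$-vertex are distributed on short faces, and rule out configurations in which every incident edge still has joint degree exceeding $\Delta(G)+3$; this step is sensitive to whether consecutive neighbors of a $5$-vertex are themselves adjacent, which in turn affects the common-neighborhood term. Similarly, the passage from $9$ to $8$ in the average-degree half demands a careful treatment of dense near-triangulations, where a crude count just misses. Once the discharging framework is in place, the remaining work is a finite but delicate case analysis checking that every unavoidable local configuration produces an edge witnessing the claimed bound.
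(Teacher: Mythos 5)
Your outline never closes the two steps that carry all the weight, so as it stands this is a plan rather than a proof. (Note also that the paper does not prove this statement: it is quoted from Kang--Yuan, and the $\Delta(G)+2$ half already has a short proof via exactly the curvature machinery the paper sets up in \eqref{eq:weight} --- the Carlson--Develin argument, which is the $\chi=2$ instance of the computation in the proof of Lemma~\ref{lem1}: if $b(G)\geq\Delta(G)+3$, then Lemma~\ref{lem:HR} forces $\min\{d(u),d(v)\}\geq 4+|N(u)\cap N(v)|$ for every edge, and a three-case check on $|N(u)\cap N(v)|$ makes every $w(uv)$ negative, contradicting \eqref{eq:weight}. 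Your vertex/face discharging with charges $d(x)-6$ and $2\ell(f)-6$ is a reasonable substitute, but the assertion that ``the fact that the total stays negative forces a light edge with $d(u)+d(v)-|N(u)\cap N(v)|\leq\Delta(G)+3$'' is precisely the theorem in the cases $\delta\in\{4,5\}$; without stated discharging rules and the accompanying analysis of unavoidable configurations, nothing has been proved.)

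The more serious problem is the constant bound $b(G)\leq 8$, which is the genuinely hard half. Your reduction rests on the claim that every planar graph with $ad(G)\geq 5$ contains an edge $uv$ on a triangle with $d(u)+d(v)\leq 10$, and that claim is false: take a planar triangulation on $n\geq 12$ vertices whose twelve vertices of degree $5$ are pairwise non-adjacent and all other vertices have degree $6$ (the dual of an isolated-pentagon fullerene). There $ad(G)=6-12/n\geq 5$, yet every edge has $d(u)+d(v)\geq 11$. The conclusion $b(G)\leq 8$ still holds for such graphs, but only because every edge lies on two triangles, so $|N(u)\cap N(v)|\geq 2$ and Lemma~\ref{lem:HR} gives $11-1-2=8$; in other words, the common-neighbour term must be exploited quantitatively, not merely ``shaved off by one.'' More generally, it is not known to follow from a single application of Lemma~\ref{lem:HR} or \eqref{eq:b'} that some edge satisfies $d(u)+d(v)-|N(u)\cap N(v)|\leq 9$ in every planar graph; Kang and Yuan's proof of the $8$ bound is a lengthy discharging and case analysis that also invokes bondage bounds beyond the single-edge inequality (e.g.\ for vertex pairs at distance two). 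So both halves of your argument stop exactly where the real difficulty begins, and the intermediate structural claim you propose for the $8$ bound would need to be replaced, not just elaborated.
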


Carlson and Develin~\cite{CarlsonDevelin} provided a simpler proof for the above theorem, which was further extended by Gagarin and Zverovich~\cite{GZ1} to establish a nice upper bound for arbitrary graphs, a step forward towards Conjecture~\ref{conj1}. The main idea is to embed graphs on surfaces, which we outline next. The readers are referred to Mohar and Thomassen~\cite{MoharThomassen} for more details on graph embedding.

Throughout this paper a \emph{surface} means a connected compact
%2-manifold without boundary.
Hausdorff topological space which is locally homeomorphic to an open disc in $\mathbb R^2$. According to the classification theorem for surfaces~\cite[Theorem 3.1.3]{MoharThomassen}, any surface $S$ is homeomorphic to either $S_h$ ($h\geq0$) which
is obtained from a sphere by adding $h$ handles, or $N_k$ ($k\geq1$) which is obtained from a sphere by adding $k$ crosscaps. In the former case $S$ is an \emph{orientable surface of genus $h$}, and in the latter case $S$ is a \emph{non-orientable surface of genus $k$}. For instance, the torus, the projective plane, and the Klein bottle are homeomorphic to $S_1$, $N_1$, and $N_2$, respectively. The \emph{Euler characteristic} of the surface $S$ is defined as
\[
\chi(S)=\begin{cases}
         2-2h, & {\rm if}\ S\cong S_h,\\
         2-k, & {\rm if}\ S\cong N_k.
        \end{cases}
\]

Any graph $G$ can be embedded on some surface $S$, that is, it can be drawn on $S$ with no crossing edges; in addition, one can take the surface $S$ to be either orientable or non-orientable. Denote by $\chi(G)$ the largest integer $\chi$ for which $G$ admits an
embedding on a surface $S$ with $\chi(S)=\chi$. For example, $\chi(G)=2$ means $G$
is planar, while $\chi(G)=1$ means $G$ is non-planar but can be embedded on the projective plane.

Suppose that $G$ is a connected graph which admits an embedding on a surface $S$ whose Euler characteristic $\chi$ is as large as possible, i.e. $\chi(S)=\chi(G)$. By Mohar and Thomassen~\cite[\S3.4]{MoharThomassen}, this embedding of $G$ on $S$ can be taken to be a \emph{2-cell embedding}, meaning that all faces are homeomorphic to an open disk. In this case one has the following result.

\begin{Euler}(c.f. \cite{MoharThomassen})
Suppose that a graph $G$ with vertex set $V(G)$ and edge set $E(G)$
admits a $2$-cell embedding on a surface $S$, and let $F(G)$ be the set
of faces in this embedding. Then
\[
|V(G)|-|E(G)|+|F(G)|=\chi(S).
\]
\end{Euler}

One needs to rewrite Euler's formula in a different form in order to apply it to obtain upper bounds for the bondage number. Every edge $uv$ in the 2-cell embedding of $G$ on $S$ appears on the boundary of either two distinct faces $F\ne F'$ or a unique
face $F=F'$; in the former case $uv$ occurs exactly once on the
boundary of each of the two faces $F$ and $F'$, while in the latter
case $uv$ occurs exactly twice on the boundary of the face $F=F'$.
Let $f(uv)$ and $f'(uv)$ be the number of edges on the boundary of
$F$ and $F'$, whether or not $F$ and $F'$ are distinct.
For instance, a path $P_n$ with $n$ vertices is embedded on a
sphere with only one face, and for any edge in $P_n$ we have
$f(uv)=f'(uv)=2(n-1)$. We may assume that $f(uv)$ and $f'(uv)$ are at least $3$---otherwise one has $G=P_2$ which is a trivial case. There is a weight associated to the edge $uv$, that is,
\[
w(uv)=\frac{1}{d(u)}+\frac{1}{d(v)}-1+\frac{1}{f(uv)}
+\frac{1}{f'(uv)}-\frac{\chi}{|E(G)|}.
\]
This weight is called the \emph{curvature} of $uv$ by Gagarin and Zverovich~\cite{GZ1}. One can rewrite Euler's formula as
\begin{equation}\label{eq:weight}
\sum_{uv\in E(G)} w(uv)=|V(G)|-|E(G)|+|F(G)|-\chi=0.
\end{equation}

The above equation was used by Carlson and Develin~\cite{CarlsonDevelin} in two special cases $\chi=2$ and $\chi=0$, and later used by Gargarin and Zverovich in full generality to establish the following result.

\begin{theorem}[Gagarin and Zverovich~\cite{GZ1}]
\label{thm:GZ}
Let $G$ be a graph embeddable on an orientable surface of
genus $h$ and a non-orientable surface of genus $k$. Then
$b(G)\leq\min\{\Delta(G)+h+2,\Delta(G)+k+1\}$.
\end{theorem}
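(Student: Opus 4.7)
The plan is a proof by contradiction using Equation~(\ref{eq:weight}). I will detail the orientable bound $b(G) \leq \Delta(G) + h + 2$; the non-orientable bound $b(G) \leq \Delta(G) + k + 1$ follows by the same argument with $\chi(G) \geq 2 - k$ in place of $\chi(G) \geq 2 - 2h$.

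Suppose for contradiction that $b(G) \geq \Delta(G) + h + 3$. Applying Lemma~\ref{lem:HR} to each edge $uv$ yields
\[
d(u) + d(v) \geq \Delta(G) + h + 4 + |N(u) \cap N(v)|,
\]
which, combined with $d(u), d(v) \leq \Delta(G)$, forces $d(u), d(v) \geq h + 4$ (and in particular $\Delta(G) \geq h + 4$). Theorem~\ref{thm:ad} additionally gives $|E(G)| \geq |V(G)|(\Delta + h + 4)/4$, a lower bound large enough to make the term $\chi(G)/|E(G)|$ negligible in the estimates to come.

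Fix a 2-cell embedding of $G$ on a surface whose Euler characteristic equals $\chi(G) \geq 2 - 2h$, so that Equation~(\ref{eq:weight}) applies. Since $\sum_{uv \in E(G)} w(uv) = 0$, it will suffice to show that $w(uv) > 0$ for every edge $uv$. The geometric input is that any triangular face incident to $uv$ must supply a common neighbor of $u$ and $v$. Letting $t(uv) \in \{0,1,2\}$ denote the number of triangular faces on $uv$, we obtain the sharper estimate $d(u) + d(v) \geq \Delta + h + 4 + t(uv)$. The proof then proceeds by cases on $(f(uv), f'(uv))$ and, within each case, on the positions of $d(u), d(v) \in [h+4, \Delta]$, with the goal of verifying
\[
\frac{1}{d(u)} + \frac{1}{d(v)} + \frac{1}{f(uv)} + \frac{1}{f'(uv)} > 1 + \frac{\chi(G)}{|E(G)|}
\]
via the convexity of $1/x$ together with the constraints above.

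The main obstacle will be the delicate numerical balance in the extremal case $d(u) = d(v) = \Delta$ with $f(uv) = f'(uv) = 3$, where the expression $2/\Delta + 2/3 - 1 - \chi/|E(G)|$ can be small or even non-positive for large $\Delta$. Here one must leverage $|N(u) \cap N(v)| \geq 2$ to push $\Delta \geq h + 6$ and exploit the structural information that the two triangular faces at $uv$ provide, either ruling out this configuration or bounding the local degrees more tightly. Once $w(uv) > 0$ is established for every edge, summing the strict inequality over $E(G)$ contradicts Equation~(\ref{eq:weight}) and completes the proof.
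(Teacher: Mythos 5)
Your overall framework (contradiction via Lemma~\ref{lem:HR}, Theorem~\ref{thm:ad}, and the curvature identity~(\ref{eq:weight})) is the right machinery --- it is exactly the Carlson--Develin/Gagarin--Zverovich technique that this paper also uses in Lemma~\ref{lem1} --- but you have the sign of the final inequality backwards, and this is fatal to the plan as written. Under the contradiction hypothesis $b(G)\geq\Delta(G)+h+3$, Lemma~\ref{lem:HR} gives \emph{lower} bounds on $d(u)$ and $d(v)$ (both at least $h+4+c(u,v)$), and the embedding gives lower bounds on $f(uv),f'(uv)$; these yield only \emph{upper} bounds on $\frac{1}{d(u)}+\frac{1}{d(v)}+\frac{1}{f(uv)}+\frac{1}{f'(uv)}$, i.e.\ upper bounds on $w(uv)$. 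The correct conclusion to aim for is therefore $w(uv)<0$ for every edge, which contradicts $\sum_{uv}w(uv)=0$; this is what Lemma~\ref{lem1} of the paper does, and what the cited proof in~\cite{GZ1} does. Your stated goal, $w(uv)>0$ for every edge, is not provable from the available constraints and is in fact false in general: an edge whose endpoints have large degree and whose incident faces are long gives $w(uv)\approx \frac{2}{h+4}-1-\frac{\chi}{|E(G)|}<0$, and nothing in the hypotheses excludes such edges, since there is no upper bound on degrees or face lengths to bound $w(uv)$ from below.

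Your own ``main obstacle'' paragraph exposes the problem: you observe that $\frac{2}{\Delta}+\frac{2}{3}-1-\frac{\chi}{|E(G)|}$ can be non-positive for large $\Delta$, and propose to fix this by forcing $\Delta$ even larger via $|N(u)\cap N(v)|\geq 2$ --- but increasing $\Delta$ only decreases $\frac{2}{\Delta}$ and makes positivity harder, whereas in the correct (negative-sign) argument larger degree lower bounds are exactly what you want. The repair is to flip the target inequality and run the case analysis on $c(u,v)\in\{0,1,\geq 2\}$ (using that triangular faces at $uv$ force common neighbors, so $c(u,v)=0$ gives $f(uv),f'(uv)\geq 4$, etc.), together with the size bound from Theorem~\ref{thm:ad} to control the $-\chi/|E(G)|$ term when $\chi<0$; also note that Theorem~\ref{thm:ad} and the 2-cell embedding argument require connectivity, so the disconnected case needs the separate componentwise reduction used in the proof of Theorem~\ref{thm1}. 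As a smaller point, your claim $c(u,v)\geq t(uv)$ is not quite right when both faces at $uv$ are triangles with the same third vertex (e.g.\ $K_3$ on the sphere), which is another reason to organize the cases by $c(u,v)$ rather than by the number of triangular faces.
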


According to Theorem~\ref{thm:GZ}, if $G$ is planar ($h=0$, $\chi=2$) or
can be embedded on the real projective plane ($k=1$, $\chi=1$), then $b(G)\leq\Delta(G)+2$. For larger values of $h$ and $k$, it was mentioned in \cite{GZ1} that improvements of Theorem~\ref{thm:GZ} can be achieved by adjusting its proof, and an explicit improvement of Theorem~\ref{thm:GZ} was obtained by the first author of this paper.

\begin{theorem}[Huang~\cite{Huang}]\label{thm:H}
Let $G$ be a graph embedded on a surface whose Euler characteristic $\chi$ is as large as possible. If $\chi\leq 0$ then $b(G)\leq\Delta(G)+\lfloor r\rfloor$,
where $r$ is the largest real root of the following cubic equation in $z$:
\[
z^3+2z^2+(6\chi-7)z+18\chi-24 = 0.
\]
A weaker but asymptotically equivalent upper bound is $b(G)\leq\Delta(G)+\lceil\sqrt{12-6\chi}-1/2\rceil$.
\end{theorem}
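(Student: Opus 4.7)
The plan is a contradiction argument based on the weighted Euler formula~(\ref{eq:weight}). Assume $b(G) \geq \Delta(G) + \lfloor r \rfloor + 1$, and set $t := \lfloor r \rfloor + 1$, so $t$ is a positive integer with $t > r$; since $p(z) := z^3 + 2z^2 + (6\chi - 7)z + 18\chi - 24$ has positive leading coefficient and $r$ as its largest real root, $p(t) > 0$.

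Through~(\ref{eq:b'}), this hypothesis has two structural consequences. From Lemma~\ref{lem:HR}, every edge $uv$ satisfies $d(u) + d(v) - |N(u) \cap N(v)| \geq \Delta(G) + t + 1$; in particular $\delta(G) \geq t + 1$, and each triangular face bordering $uv$ supplies a distinct common neighbor of $u$ and $v$, which further inflates the lower bound on $d(u) + d(v)$ by the number of such triangular faces. From Theorem~\ref{thm:ad}, $|E(G)| \geq |V(G)|(\Delta(G) + t + 1)/4 \geq (\Delta(G)+1)(\Delta(G)+t+1)/4$, using $|V(G)| \geq \Delta(G) + 1$.

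The core step is to show $w(uv) < 0$ for every edge, contradicting $\sum_{uv} w(uv) = 0$. Split edges by the number of triangular faces they bound (zero, one, or two). In each class, bound $1/f(uv) + 1/f'(uv)$ above by $1/2$, $7/12$, or $2/3$, respectively, and, via the convexity of $x \mapsto 1/x$ under a sum constraint, bound $1/d(u) + 1/d(v)$ above in terms of $\Delta(G)$ and the corresponding strengthened minimum-degree-of-this-edge value $t+1$, $t+2$, or $t+3$. Combining with the bound on $-\chi/|E(G)|$ from the previous paragraph and clearing denominators, the sign of $w(uv)$ is controlled in each class by a polynomial inequality in $t$, $\Delta(G)$, and $\chi$; minimizing over $\Delta(G) \geq t+1$ reduces each inequality to a positive multiple of $-p(t)$. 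Since $p(t) > 0$, we get $w(uv) < 0$ uniformly, delivering the contradiction.

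The main obstacle is the arithmetic that makes all three sub-case inequalities collapse to the same cubic $p(t)$ with the stated coefficients $2$, $6\chi-7$, $18\chi-24$, and identifying the extremal sub-case and value of $\Delta(G)$. For the weaker asymptotic bound $b(G) \leq \Delta(G) + \lceil \sqrt{12 - 6\chi} - 1/2 \rceil$, set $z_0 := \lceil \sqrt{12 - 6\chi} - 1/2 \rceil$ and verify $p(z_0) \geq 0$ directly using $(z_0 + 1/2)^2 \geq 12 - 6\chi$; combined with $p$ being positive beyond its largest real root, this forces $r \leq z_0$ and hence $\lfloor r \rfloor \leq z_0$.
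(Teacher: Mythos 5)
This theorem is not proved in the present paper at all --- it is quoted from Huang's earlier article \cite{Huang} --- so the only in-paper comparison available is with the analogous (and stronger) argument of Lemma~\ref{lem:main}, Lemma~\ref{lem:roots}, and Lemma~\ref{lem1}. Your overall engine (assume $b(G)\geq\Delta(G)+t$ with $t=\lfloor r\rfloor+1$, extract degree and edge-count lower bounds from (\ref{eq:b'}), and force $w(uv)<0$ for every edge, contradicting (\ref{eq:weight})) is the right one. But the decisive step is missing and, as stated, incorrect: the three sub-cases do \emph{not} ``collapse to a positive multiple of $-p(t)$.'' Carrying out your own scheme (using $1/d(u)+1/d(v)\leq 1/(t+1+k)+1/\Delta(G)$, $|E(G)|\geq(\Delta(G)+1)(\Delta(G)+t+1)/4$, and evaluating at the worst case $\Delta(G)=t+1+k$) yields three \emph{different} polynomial conditions; for instance the triangle-free case reduces, after clearing denominators, to $t^{2}-t-6+4\chi>0$, and the two-triangle case to $t^{3}+3t^{2}+(6\chi-10)t+18\chi-24>0$, neither of which is $p(t)>0$. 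To get the stated cubic one still needs a root-location lemma in the spirit of Lemma~\ref{lem:roots}: each of the case polynomials must be shown positive for every integer $t>r$, together with $r\geq 3$ (also needed to handle disconnected $G$, since Theorem~\ref{thm:ad} requires connectedness and you never reduce to components). Declaring this arithmetic ``the main obstacle'' leaves the proof open exactly at its crux; as a minor point, two triangular faces on $uv$ give \emph{distinct} common neighbors only after the degenerate configuration is excluded via the minimum-degree bound.

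The second half of your proposal is actually false as argued. Take $\chi=-3$: then $z_{0}=\lceil\sqrt{30}-1/2\rceil=5$ but $p(5)=125+50-125-78=-28<0$, and since $p(6)=60>0$ the largest root satisfies $r\in(5,6)$, so both your claim $p(z_{0})\geq 0$ and the conclusion $r\leq z_{0}$ fail (only $\lfloor r\rfloor\leq z_{0}$ is true, which is all the theorem asserts). The correct route is to show $r<z_{0}+1$, e.g.\ by proving $p(z)>0$ for every $z\geq\sqrt{12-6\chi}+1/2$; note also that even where $p(z_{0})\geq 0$ does hold, positivity of a cubic at a single point does not place that point beyond its largest real root (it may lie between roots), so an additional monotonicity or derivative argument, like the ``conversely'' portion of Lemma~\ref{lem:roots}, is indispensable.
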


In Section~\ref{sec:arbitrary} we will prove an upper bound which is not only stronger than the above result but also asymptotically better as $\chi\to -\infty$. We will also establish further upper bounds in Section~\ref{sec:girth}, Section~\ref{sec:order}, and Section~\ref{sec:size} for the bondage number $b(G)$ when the girth, order, or size of the graph $G$ is large, which improve another result of the first auther~\cite{Huang} and the following result of Gagarin and Zverovich~\cite[Corollary~17, Corollary~19]{GZ2}.

\begin{theorem}[Gagarin and Zverovich~\cite{GZ2}]\label{thm:GZ2}
Let G be a connected graph $2$-cell embeddable on an orientable surface of genus $h\geq1$ and a non-orientable surface of genus $k\geq1$. Then

%\vskip3pt
%\noindent\begin{minipage}{.47\textwidth}
\begin{itemize}
\item
$b(G) \leq \Delta(G) + \lceil \ln^2 h \rceil + 3$ if $n \geq h$,
\item
 $b(G) \leq \Delta(G) + \lceil \ln h \rceil + 3$ if $n \geq h^{1.9}$,
\item
  $b(G) \leq \Delta(G) + 4$ if $n \geq h^{2.5}$,
%\end{itemize}
%\end{minipage}
%\begin{minipage}{.5\textwidth}
%\begin{itemize}
\item
  $b(G) \leq \Delta(G) + \lceil \ln^2 k \rceil + 2$ if $n \geq k/6$,
\item
$b(G) \leq \Delta(G) + \lceil \ln k \rceil + 3$ if $n \geq k^{1.6}$,
\item
$b(G) \leq \Delta(G) + 3$ if $n \geq k^2$.
\end{itemize}
%\end{minipage}
\end{theorem}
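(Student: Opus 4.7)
The plan is to apply the curvature argument associated with Euler's formula~(\ref{eq:weight}), refining the Gagarin--Zverovich strategy to exploit the hypothesis that $n$ is large relative to $h$ (or $k$). I describe the orientable case; the non-orientable case is analogous, with $\chi=2-k$ in place of $\chi=2-2h$ and correspondingly adjusted thresholds.

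Assume, toward a contradiction, that $b(G)\ge\Delta(G)+c+1$, where $c$ denotes the constant appearing in the bullet at hand (so $c=\lceil\ln^2 h\rceil+3$ for the first bullet, $c=\lceil\ln h\rceil+3$ for the second, $c=4$ for the third). Lemma~\ref{lem:HR} applied to every edge $uv$ forces $d(u)+d(v)\ge\Delta(G)+c+2$, and in particular $\delta(G)\ge c+2$. The bound $b(G)\le b'(G)\le 2\lfloor ad(G)\rfloor-1$ from~(\ref{eq:b'}) then gives
\[
|E(G)|\ \ge\ \frac{n\bigl(\Delta(G)+c+2\bigr)}{4}.
\]

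For every edge $uv$, combining $f(uv),f'(uv)\ge 3$ with $d(u),d(v)\ge c+2$ and $d(u)+d(v)\ge\Delta(G)+c+2$ yields the curvature estimate
\[
w(uv)\ \le\ \frac{1}{c+2}+\frac{1}{\Delta(G)}-\frac{1}{3}+\frac{8(h-1)}{n\bigl(\Delta(G)+c+2\bigr)}.
\]
Under the hypothesis $n\ge h^{\alpha}$ with $\alpha\in\{1,1.9,2.5\}$ the last term is controlled by $h^{1-\alpha}/(\Delta(G)+c+2)$ up to constants. One verifies, with the specified $c$, that the right-hand side is strictly negative whenever $\Delta(G)$ exceeds an explicit threshold depending only on $h$ and $\alpha$, so summing gives $\sum_{uv\in E(G)}w(uv)<0$ in contradiction to~(\ref{eq:weight}). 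For the remaining range of small $\Delta(G)$, the direct Hartnell--Rall bound $b(G)\le 2\Delta(G)-1$ from Lemma~\ref{lem:HR} already implies the target inequality, completing the argument.

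The main obstacle is calibrating $c$ as small as the theorem claims. Under the weakest hypothesis $n\ge h$ the perturbation term $\tfrac{8(h-1)}{n(\Delta+c+2)}$ is essentially $\tfrac{8}{\Delta+c+2}$, and balancing $\tfrac{1}{c+2}$ against $\tfrac{1}{\Delta}$ and this term uniformly in $\Delta$ is what forces the growth $c\sim\ln^2 h$; the quadratic logarithmic rate arises precisely because the slack from $|E|\ge n\Delta/4$ has only constant-factor benefit when $n$ and $h$ are comparable. Under $n\ge h^{1.9}$ and $n\ge h^{2.5}$ the perturbation shrinks polynomially in $h$, letting $c$ drop to $\lceil\ln h\rceil+3$ and $4$. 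A secondary technical issue is that the extremal pair $(d(u),d(v))=(c+2,\Delta)$ realizing the bound on $\tfrac{1}{d(u)}+\tfrac{1}{d(v)}$ need not occur at every edge; handling this cleanly requires aggregating curvatures over edges grouped by the degree of the smaller endpoint, as in Gagarin and Zverovich~\cite{GZ2}, rather than working edge-by-edge.
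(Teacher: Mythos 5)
This statement is not proved in the paper at all: Theorem~\ref{thm:GZ2} is quoted from Gagarin and Zverovich~\cite{GZ2} (their Corollaries~17 and~19), so your argument has to stand on its own, and as written it does not. The core gap is the step ``one verifies, with the specified $c$, that the right-hand side is strictly negative whenever $\Delta(G)$ exceeds an explicit threshold,'' combined with the fallback $b(G)\le 2\Delta(G)-1$ for small $\Delta(G)$. The fallback yields the target bound only when $\Delta(G)\le c+1$, whereas the negativity threshold of your estimate $\frac{1}{c+2}+\frac{1}{\Delta}-\frac13+\frac{8(h-1)}{n(\Delta+c+2)}$ is an absolute constant well above $c+1$ in the relevant regimes. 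Concretely, for the second bullet with $h=3$, $n\ge h^{1.9}$, $c=\lceil\ln 3\rceil+3=5$, the expression is still positive for $\Delta\in\{7,\dots,11\}$, while the fallback only covers $\Delta\le 6$; for the first bullet with $n\ge h$ the uncovered range of $\Delta$ is even larger (roughly $c+2\le\Delta\le 28$ already for $h=5$). Nothing in your sketch handles this middle range: you would at least need the common-neighbor refinement of Lemma~\ref{lem:main} and the face-size case analysis ($c(u,v)=0,1,\ge2$ with $f(uv),f'(uv)\ge 4,4$ / $4,3$ / $3,3$) that the paper uses in Lemma~\ref{lem1} and Theorem~\ref{thm:order}, rather than the blanket bound $f,f'\ge 3$.

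A second, more structural problem is that the logarithmic rates are asserted, not derived. Your displayed inequality can only ever force $c$ to be at least some absolute constant (or a constant depending on the ratio $h/n$); no choice of threshold in it produces $\lceil\ln^2 h\rceil$ or $\lceil\ln h\rceil$, and the sentence explaining why ``the quadratic logarithmic rate arises'' is a heuristic, not a proof. You then defer the missing mechanism to ``aggregating curvatures over edges grouped by the degree of the smaller endpoint, as in Gagarin and Zverovich~\cite{GZ2}'' --- i.e., to the very source whose theorem you are supposed to prove, which makes the argument circular at exactly the point where the real work lies. (Incidentally, the edge-by-edge bound $\frac{1}{d(u)}+\frac{1}{d(v)}\le\frac{1}{c+2}+\frac{1}{\Delta}$ is valid given $d(u)+d(v)\ge\Delta+c+2$ and $\min\{d(u),d(v)\}\ge c+2\;$, so that is not where the difficulty is; the difficulty is that this bound is simply too weak in the intermediate-$\Delta$ range and carries no $\ln h$ dependence.)
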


Gagarin and Zverovich~\cite{GZ2} also obtained some constant upper bounds for the bondage number of graphs embedded on surfaces. It seems interesting to look for further improvements of these constant bounds in the future.

\section{The general case}\label{sec:arbitrary}

In this section we establish an upper bound for the bondage number $b(G)$ of a graph $G$ with $\chi(G)\leq 0$, which improves the previously known upper bounds for $b(G)$ in such cases.

\begin{lemma}\label{lem:main}
Let $G$ be a connected graph with $b'(G)\geq\Delta(G)+z$ for some integer $z\geq0$. Let $uv$ be an arbitrary edge of $G$ and write $c(u,v)=|N(u)\cap N(v)|$. Then $\min\{d(u),d(v)\}\geq z+1+c(u,v)$ and
\begin{equation}\label{eq:edge}
|E(G)|\geq |V(G)|(2z+2+c(u,v))/4 \geq (2z+2+c(u,v))^2/4.
\end{equation}
\end{lemma}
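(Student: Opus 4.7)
The plan is to derive both conclusions from the two terms in the definition \eqref{eq:b'} of $b'(G)$, combined with a careful count of the vertices in the closed neighborhood of the edge $uv$.

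First I would extract the lower bound on $\min\{d(u),d(v)\}$ from the first term. The hypothesis $b'(G)\geq\Delta(G)+z$ forces $d(u)+d(v)-1-c(u,v)\geq\Delta(G)+z$ for every edge $uv$. Assuming without loss of generality that $d(u)\leq d(v)$, I bound $d(v)\leq\Delta(G)$ and rearrange to obtain $d(u)\geq z+1+c(u,v)$, which is the first assertion. As a free byproduct this gives $\Delta(G)\geq z+1+c(u,v)$, a fact I would flag for later use.

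Next I would derive the vertex count $|V(G)|\geq 2z+2+c(u,v)$, which will be needed for the final inequality in \eqref{eq:edge}. Among the $d(u)$ neighbors of $u$, one is $v$ and $c(u,v)$ are shared with $v$, so $u$ has at least $d(u)-1-c(u,v)\geq z$ private neighbors (those adjacent to $u$ but not to $v$ and distinct from $v$). Symmetrically, $v$ has at least $z$ private neighbors. These two private sets are disjoint, disjoint from the common neighborhood, and disjoint from $\{u,v\}$, giving $|V(G)|\geq 2+c(u,v)+z+z$.

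Finally I would obtain \eqref{eq:edge} by exploiting the second term in \eqref{eq:b'}. The bound $2\lfloor ad(G)\rfloor-1\geq b'(G)\geq\Delta(G)+z$ yields $2|E(G)|/|V(G)|=ad(G)\geq(\Delta(G)+z+1)/2$, hence $|E(G)|\geq|V(G)|(\Delta(G)+z+1)/4$. Substituting $\Delta(G)\geq z+1+c(u,v)$ from the first step replaces the parenthetical with $2z+2+c(u,v)$, giving the left inequality of \eqref{eq:edge}, and applying the vertex bound $|V(G)|\geq 2z+2+c(u,v)$ from the second step yields the right one.

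None of the steps is really difficult; the only place where one has to be slightly careful is the vertex count, making sure the two private-neighbor sets are genuinely disjoint from each other and from $\{u,v\}\cup(N(u)\cap N(v))$ so that no vertex is double-counted. Everything else is direct manipulation of the two inequalities packaged into $b'(G)$.
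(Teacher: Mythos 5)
Your proposal is correct and follows essentially the same route as the paper: it uses the first term of the definition of $b'(G)$ together with $d(v)\leq\Delta(G)$ to get $\min\{d(u),d(v)\}\geq z+1+c(u,v)$, counts the closed neighborhood of the edge $uv$ to get $|V(G)|\geq 2z+2+c(u,v)$, and then invokes the average-degree term $2\lfloor ad(G)\rfloor-1$ to obtain both inequalities in \eqref{eq:edge}. The only differences are cosmetic bookkeeping (a symmetric private-neighbor count versus the paper's count of $\{u\}\cup N(u)$ plus $z$ extra neighbors of $v$, and using $\Delta(G)$ rather than $d(u)$ in the final chain), so no further comment is needed.
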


\begin{proof}
Assume $d(u)\leq d(v)$, without loss of generality. By (\ref{eq:b'}), one has
\[
\Delta(G)+z\leq b'(G)\leq d(u)+d(v)-1-c(u,v).
\]
Thus
\[
d(u)\geq \Delta(G)-d(v)+z+1+c(u,v)\geq z+1+c(u,v).
\]
%This implies that $\delta(G)\geq z+1$ since the edge $uv$ is arbitrary.
It follows that $d(v)\geq d(u)\geq z+1+c(u,v)$, which implies that $v$ has at least $z$ neighbors that are not contained in $\{u\}\cup N(u)$. Hence
\[
|V(G)|\geq 1+(z+1+c(u,v))+z=2z+2+c(u,v).
\]
By (\ref{eq:b'}), one also has
\[
4|E(G)|/|V(G)|-1 \geq b'(G)\geq \Delta(G)+z\geq d(u)+z\geq 2z+1+c(u,v).
\]
Thus (\ref{eq:edge}) holds.
\end{proof}

\begin{lemma}\label{lem:roots}
Let $z\geq0$ and $\chi\leq0$. Then the following inequalities
\begin{equation}\label{ineq1}
A(z)=z^2-2z+2\chi-3>0,
\end{equation}
\begin{equation}\label{ineq2}
B(z)=20z^3+4z^2+3(16\chi-41)z+96\chi-126>0,
\end{equation}
\begin{equation}\label{ineq3}
C(z)=z^3 + z^2 + (3\chi - 8)z  + 9\chi - 12>0
\end{equation}
hold if and only if $z$ is larger than the largest real root $t=t(\chi)$ of $C(z)$. In addition, $t\geq3$ is a decreasing function of $\chi\leq0$.
\end{lemma}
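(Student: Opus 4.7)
The plan is to separate the assertions of the lemma into four pieces and handle them in a convenient order: (a) $t(\chi)\geq 3$ for $\chi\leq 0$, with equality iff $\chi=0$; (b) $t(\chi)$ is a decreasing function of $\chi$; (c) on the range $z\geq 0$, the sign of $C$ alone pins down the threshold, i.e.\ $C(z)>0\iff z>t$; (d) for such $z$ one automatically has $A(z)>0$ and $B(z)>0$. Together (c) and (d) give the iff claim, since the converse direction reduces to (c) alone.

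For (a), I would evaluate $C(3)=18\chi\leq 0$ and invoke the positive leading coefficient of $C$ to conclude that the largest real root satisfies $t\geq 3$; at $\chi=0$ the factorization $C(z)=(z-3)(z+2)^2$ gives equality. For (b), I would rewrite $C(z,\chi)=(z^3+z^2-8z-12)+3(z+3)\chi$ and note that $\partial C/\partial\chi=3(z+3)>0$ on $[3,\infty)$; comparing values at $\chi_1<\chi_2\leq 0$ yields $C(t(\chi_2),\chi_1)<C(t(\chi_2),\chi_2)=0$, which forces $t(\chi_1)>t(\chi_2)$ because $C(\cdot,\chi_1)\to\infty$. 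For (c), Vieta's formulas say the three roots of $C$ (counted with multiplicity) sum to $-1$ with product $12-9\chi>0$; a one-line case check (three real roots must split as two negative and one positive, while a single real root accompanied by a complex conjugate pair is automatically positive) shows that $C$ has a unique positive real root, namely $t$. Combined with $C(0)=9\chi-12<0$, this gives $C(z)>0$ on $[0,\infty)$ iff $z>t$.

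For (d), the key step is to solve $C(t)=0$ for $\chi$:
\[
\chi=-\frac{(t-3)(t+2)^2}{3(t+3)},
\]
and substitute this into $A(t)$ and $B(t)$. The $A$-substitution simplifies directly to
\[
A(t)=\frac{(t-3)(t^2+4t+1)}{3(t+3)}\geq 0 \quad\text{for } t\geq 3,
\]
and since $A$ is a parabola opening upward with vertex at $z=1$, it is strictly increasing on $[1,\infty)$, so $A(z)>A(t)\geq 0$ for every $z>t$. The analogous $B$-substitution reduces to showing that the quartic $4t^4+16t^3-15t^2-47t+6$ is positive on $[3,\infty)$, which is easy by grouping (for instance, $t^2(4t^2-15)\geq 189$ and $t(16t^2-47)\geq 291$ for $t\geq 3$). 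A second Vieta argument applied to $B$ (sum of roots $-1/5$, product $(126-96\chi)/20>0$) shows that $B$ has a unique positive real root, and $B(t)>0$ forces that root to lie below $t$, giving $B(z)>0$ for all $z>t$.

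The main obstacle I anticipate is purely algebraic: the $B$-substitution is considerably heavier than the $A$-one, producing a quartic in $t$ after clearing the denominator $t+3$ rather than a cubic with the factor $(t-3)$ already visible; some care is needed to keep the expansion manageable, and the positivity of the quartic on $[3,\infty)$ must be argued cleanly. Everything else is either a one-line Vieta observation, an elementary factorization, or a routine monotonicity comparison.
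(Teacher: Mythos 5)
Your proposal is correct, but it takes a genuinely different route from the paper. You exploit the fact that $\chi$ appears linearly in $C$ to solve $C(t)=0$ for $\chi=-\tfrac{(t-3)(t+2)^2}{3(t+3)}$ and then evaluate $A(t)$ and $B(t)$ as explicit rational functions of $t$ alone (I checked the key identities: $A(t)=\tfrac{(t-3)(t^2+4t+1)}{3(t+3)}$ and $B(t)=\tfrac{4t^4+16t^3-15t^2-47t+6}{t+3}$, both nonnegative/positive on $[3,\infty)$), and you obtain the ``only if'' direction from a Vieta sign count: since $C$ has negative root sum and positive root product, it has a unique (simple) positive real root, so on $[0,\infty)$ the condition $C(z)>0$ alone already forces $z>t$; the same Vieta count for $B$ plus $B(t)>0$ places the unique positive root of $B$ below $t$. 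The paper instead never eliminates $\chi$: it compares largest roots directly by evaluating $C$ at the explicit root $1+\sqrt{4-2\chi}$ of $A$, by using the ad hoc combination $B(z_3)-16C(z_3)>0$ at the largest root $z_3$ of $B$, and it proves the converse by a mean value theorem argument ($C'(s)<0$ on $(z,t)$ compared against $3A(s)$). Your approach buys a structurally cleaner equivalence (the ``if and only if'' drops out of uniqueness of the positive root of $C$, with no MVT needed) and makes the dependence on $t$ explicit, at the cost of the heavier quartic expansion for $B$; the paper's approach avoids that quartic but needs the clever auxiliary combinations and the derivative trick. One small point to keep explicit when writing it up: in the Vieta step, argue with roots counted with multiplicity (as you parenthetically do), so that the positive root is automatically simple and $C$, $B$ genuinely change sign there.
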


\begin{proof}
We first show that the largest real root $t=t(\chi)$ of $C(z)$ is larger
than or equal to all the real roots of $A(z)$ and $B(z)$,
by using the intermediate value theorem and the limits
\[
\lim_{z\to\infty}A(z)=
\lim_{z\to\infty}B(z)=
\lim_{z\to\infty}C(z)=\infty.
\]

The polynomial $A(z)$ has two roots
\[
z_1=1+\sqrt{4-2\chi}>0 \quad \text{and} \quad
z_2=1-\sqrt{4-2\chi}<0.
\]
%C(z) = A(z) (z+3) + (\chi+1)z +3\chi-3
Substituting $z_1$ in $C(z)$ gives
\[
C(z_1)= (\chi + 1)\sqrt{4-2\chi} + 4\chi - 2
\]
which is negative if $\chi\leq -1$ and is $0$ if $\chi=0$. By the intermediate value theorem,
$C(z)$ has a real root larger than or equal to $z_1$. Thus $t\geq z_1>z_2$.

Next consider $B(z)$. If $\chi=0$ then $B(z)=(5z-14)(2z+3)^2$, $C(z)=(z-3)(z+2)^2$, and thus $t=3$ is larger than the two roots $14/5$ and $-3/2$ of $B(z)$. Assume $\chi\leq -1$ below. Then $B(3)=240\chi+81<0$. Applying the intermediate value theorem to $B(z)$ gives the existence of real root(s) of $B(z)$ in $(3,\infty)$; let $z_3$ be the largest one. Then
\begin{eqnarray*}
B(z_3)-16C(z_3) &=& 4z_3^3-12z_3^2+5z_3+66-48\chi \\
&=& z_3(2z_3-1)(2z_3-5) + 66-48\chi >0
\end{eqnarray*}
which implies $C(z_3)<0$. Again by the intermediate value
theorem, $C(z)$ has a root larger than $z_3$, and thus
$t>z_3>3$.

Therefore $t=t(\chi)\geq3$ is larger than or equal to all the real roots of
$A(z)$ and $B(z)$ for all $\chi\leq 0$. It follows that $A(z)$, $B(z)$,
and $C(z)$ are all positive whenever $z>t$; otherwise the
intermediate value theorem would imply that $A(z)$, $B(z)$,
or $C(z)$ has a root larger than $t$, a contradiction.

Conversely, assume that $A(z)$, $B(z)$, and $C(z)$ are all positive and we need to show that $z>t$. Suppose to the contrary that $0\leq z\leq t$. It is clear that $z\ne t$ since $C(z)>0=C(t)$. Thus $z<t$ and there exists a point $s$ in $(z,t)$ such that
\[
C'(s)=3s^2 + 2s + 3\chi - 8 <0
\]
by the mean value theorem. Then
\[
C'(s)-3A(s) = 8s-3\chi+1>0
\]
implies $A(s)<0$. We have seen that the upward parabola $A(z)$ has two roots
$z_1>0$ and $z_2<0$. Since $A(z)>0$ and $z\geq0$, one has $z>z_1$. Then $s>z$ implies $A(s)>0$, a contradiction. Hence $z>t$.

Finally, we show that $t(\chi)\geq 3$ is a decreasing function of $\chi\leq 0$. For any $\epsilon>0$, one has
\[
C(z;\chi)-C(z;\chi-\epsilon) = (3z+9)\epsilon.
\]
This implies
\[
C(t(\chi);\chi-\epsilon)=-(3t(\chi)+9)\epsilon<0.
\]
By the
intermediate value theorem, $C(z;\chi-\epsilon)$ has a real root
larger than $t(\chi)$, and thus its largest real root
$t(\chi-\epsilon)$ is also larger than $t(\chi)$.
\end{proof}

\begin{lemma}\label{lem1}
Let $G$ be a connected graph embedded on a surface whose Euler characteristic $\chi$ is as large as possible. Assume $\chi\leq 0$ and let the largest real root of $z^3 + z^2 + (3\chi - 8)z  + 9\chi - 12$ be $z=t=t(\chi)$. Then $b'(G)\leq\Delta(G)+\lfloor t\rfloor$.
\end{lemma}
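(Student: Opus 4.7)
The plan is to argue by contradiction: assume $b'(G) \geq \Delta(G) + z$ with $z := \lfloor t \rfloor + 1$. Then $z > t$, so Lemma~\ref{lem:roots} delivers $A(z) > 0$, $B(z) > 0$, and $C(z) > 0$ simultaneously. Under this hypothesis I will show that the curvature $w(uv)$ is strictly negative on \emph{every} edge $uv$ of $G$, which contradicts the identity $\sum_{uv \in E(G)} w(uv) = 0$ from \eqref{eq:weight} and therefore forces $b'(G) \leq \Delta(G) + \lfloor t \rfloor$.

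Fix an edge $uv$ and write $c = c(u,v)$. Lemma~\ref{lem:main} supplies $\min\{d(u),d(v)\} \geq z+1+c$ and $|E(G)| \geq (2z+2+c)^2/4$. Since $\chi \leq 0$, substituting these into the definition of $w(uv)$ yields
\[
w(uv) \leq \frac{2}{z+1+c} - 1 + \frac{1}{f(uv)} + \frac{1}{f'(uv)} - \frac{4\chi}{(2z+2+c)^2},
\]
and every $c$-dependent term on the right is decreasing in $c$. Thus within each case it suffices to work with the smallest admissible $c$. The remaining input is a case-sensitive bound on $1/f(uv) + 1/f'(uv)$, controlled by the structural principle that each triangular face through $uv$ must use a distinct common neighbor of $u$ and $v$.

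I would split into three cases. Case~A ($c=0$): no triangle contains $uv$, so $f(uv), f'(uv) \geq 4$; clearing denominators by $2(z+1)^2$ converts $w(uv) < 0$ into exactly $A(z) > 0$. Case~B ($c=1$): at most one of the two incident faces can be the triangle through the unique common neighbor (and the degenerate possibility $F = F'$ also forbids a triangular face in a simple graph, since it would require a loop on the boundary walk), so $1/f(uv) + 1/f'(uv) \leq 1/3 + 1/4 = 7/12$, and clearing denominators by $12(z+2)(2z+3)^2$ converts $w(uv) < 0$ into $B(z) > 0$. Case~C ($c \geq 2$): both incident faces may be triangular, allowing only $1/f(uv) + 1/f'(uv) \leq 2/3$, but now $d(u), d(v) \geq z+3$ and $|E(G)| \geq (z+2)^2$, so clearing denominators by $3(z+3)(z+2)^2$ (after evaluating the monotone terms at the extremal $c=2$) converts $w(uv) < 0$ into $C(z) > 0$.

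The conceptual core is the case split together with the counting observation that each triangular face through $uv$ consumes a distinct vertex of $N(u) \cap N(v)$. The main obstacle is bookkeeping: the correct extremal $c$ has to be extracted in each case, and three algebraic reductions must land precisely on the polynomials $A$, $B$, $C$ prepared by Lemma~\ref{lem:roots} (the Case~B expansion is the longest but remains routine). Once $w(uv) < 0$ is established in all three cases, summing over edges contradicts \eqref{eq:weight}, completing the proof of Lemma~\ref{lem1}.
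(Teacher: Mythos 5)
Your proposal is correct and follows essentially the same route as the paper's own proof: contradiction via an integer $z>t$, Lemma~\ref{lem:main} for the degree and size bounds, the case split $c(u,v)=0,1,\geq 2$ with face-length bounds $\{4,4\},\{3,4\},\{3,3\}$, and reduction of $w(uv)<0$ to the positivity of $A$, $B$, $C$ from Lemma~\ref{lem:roots}, contradicting \eqref{eq:weight}. The only differences are cosmetic: you justify the face-length bounds explicitly (triangular faces through $uv$ correspond to common neighbors), which the paper leaves implicit, and you organize the estimate via monotonicity in $c$ rather than plugging in the extremal $c$ case by case.
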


\begin{proof}
Since $b'(G)$ is an integer, it suffices to show that $b'(G)<\Delta(G)+z$ for any integer $z>t$. Suppose to the contrary that $b'(G)\geq \Delta(G)+z$ for some integer $z>t$. By Lemma~\ref{lem:roots}, the inequalities (\ref{ineq1}), (\ref{ineq2}), and (\ref{ineq3}) all hold since $z>t$.  Let $uv$ be an arbitrary edge in $G$. Assume $d(u)\leq d(v)$ and $f(uv)\leq f'(uv)$, without loss of generality. Let $c(u,v)=|N(u)\cap N(v)|$. By Lemma~\ref{lem:main}, one has
\[
d(v)\geq d(u) \ge z+1+c(u,v),
\]
\[
|E(G)|\geq (2z+2+c(u,v))^2/4.
\]

If $c(u,v)=0$ then $d(v)\geq d(u)\geq z+1$, $|E(G)|\geq (z+1)^2$, and $f'(uv)\geq f(uv)\geq4$. Thus
\begin{eqnarray*}
w(uv) &\leq & \frac 2{z+1}+\frac14+\frac14 -1
-\frac{\chi}{(z+1)^2} \\
&=& -\frac{z^2-2z+2\chi-3}{2(z+1)^2} <0
\end{eqnarray*}
where the last inequality follows from (\ref{ineq1}).

If $c(u,v)=1$ then $d(v)\geq d(u)\geq z+2$, $|E(G)|\geq (2z+3)^2/4$, $f'(uv)\geq 4$, and $f(uv)\geq 3$. Thus
\begin{eqnarray*}
w(uv) & \leq & \frac2{z+2}+\frac14+\frac13-1-
\frac{4\chi}{(2z+3)^2} \\
&=& -\frac{20z^3+4z^2+3(16\chi-41)z+96\chi-126}
{12(z+2)(2z+3)^2}<0
\end{eqnarray*}
where the last inequality follows from (\ref{ineq2}).

If $c(u,v)\geq 2$ then $d(v)\geq d(u)\geq z+3$, $|E(G)|\geq (z+2)^2$, and $f'(uv)\geq f(uv)\geq 3$. Thus
\begin{eqnarray*}
w(uv)&\leq& \frac2{z+3}+\frac13+\frac13-1
-\frac{\chi}{(z+2)^2}\\
&=&-\frac{z^3 + z^2 + (3\chi - 8)z  + 9\chi - 12}{3(z+3)(z+2)^2}<0
\end{eqnarray*}
where the last inequality follows from (\ref{ineq3}).

Therefore $w(uv)<0$ for all edges $uv$ in $G$. This contradicts Equation (\ref{eq:weight}). It follows that $b'(G)<\Delta(G)+z$ for any integer $z>t$, which implies $b'(G)\leq\Delta(G)+\lfloor t\rfloor$.
\end{proof}

\begin{theorem}\label{thm1}
Let $G$ be a graph embedded on a surface whose Euler characteristic $\chi$ is as large as possible. Assume $\chi\leq 0$ and let the largest real root of $z^3 + z^2 + (3\chi - 8)z  + 9\chi - 12$ be $z=t=t(\chi)$. Then $b(G)\leq\Delta(G)+\lfloor t\rfloor$.
\end{theorem}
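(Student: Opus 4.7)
The plan is to reduce Theorem~\ref{thm1} to Lemma~\ref{lem1} via the inequality $b(G)\leq b'(G)$ observed in the introduction. If $G$ is already connected, there is essentially nothing to do: Lemma~\ref{lem1} yields $b'(G)\leq\Delta(G)+\lfloor t\rfloor$, and combined with $b(G)\leq b'(G)$ this gives the claim immediately.

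The only real work is handling disconnected $G$. First I would discard isolated vertices, since they affect neither $b(G)$, nor $\Delta(G)$, nor $\chi(G)$. Among the remaining nontrivial components, pick one, say $H$, that minimises $b(H)$. Because $\gamma$ is additive over components, any minimum bondage set of $H$ also strictly increases $\gamma(G)$, so $b(G)\leq b(H)$. Restricting the optimal embedding of $G$ to $H$ shows that $H$ embeds on the same surface, hence $\chi(H)\geq\chi(G)$; trivially $\Delta(H)\leq\Delta(G)$. By the monotonicity statement in Lemma~\ref{lem:roots}, $t(\chi(H))\leq t(\chi(G))$.

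It remains to bound $b(H)$, and I would split on whether $\chi(H)\leq 0$ or $\chi(H)>0$. In the first case, Lemma~\ref{lem1} applies to the connected graph $H$ directly and gives
\[
b(G)\leq b(H)\leq b'(H)\leq \Delta(H)+\lfloor t(\chi(H))\rfloor\leq \Delta(G)+\lfloor t(\chi(G))\rfloor.
\]
In the second case, $H$ is planar or embeds on the projective plane, and the Kang--Yuan theorem (or the earlier Gagarin--Zverovich bound) gives $b(H)\leq\Delta(H)+2$; since Lemma~\ref{lem:roots} implies $\lfloor t(\chi(G))\rfloor\geq\lfloor t(0)\rfloor=3$, the desired bound still follows. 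The main difficulty lies entirely inside Lemma~\ref{lem1}: Theorem~\ref{thm1} is a short packaging argument, and the only subtlety is that a component may carry strictly larger Euler characteristic than $G$ itself, which is absorbed cleanly by the monotonicity of $t(\chi)$ together with the known planar and projective bounds.
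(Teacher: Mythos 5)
Your proposal is correct and follows essentially the same route as the paper: reduce to the connected case via $b(G)\leq b'(G)$ and Lemma~\ref{lem1}, then handle a disconnected $G$ by passing to a component, using the monotonicity of $t(\chi)$ from Lemma~\ref{lem:roots} when the component has $\chi\leq 0$ and the known $\Delta+2$ bound (together with $t\geq 3$) when it has $\chi>0$. The only cosmetic differences are that you select the component minimizing $b$ and explicitly discard isolated vertices, whereas the paper simply invokes $b(G)=\min_i b(G_i)$.
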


\begin{proof}
If $G$ is connected then Lemma~\ref{lem:HR}, Theorem~\ref{thm:ad}, and Lemma~\ref{lem1} imply $b(G)\leq b'(G)\leq \Delta(G)+\lfloor t\rfloor$. If $G$ has multiple components $G_1,\ldots,G_\ell$, then $\chi\leq\chi_i=\chi(G_i)$ for all $i$, since an embedding of $G$ on a surface $S$ automatically includes an embedding of $G_i$ on $S$. By definition, $b(G)=\min\{b(G_1),\ldots,b(G_\ell)\}$. If $\chi_i>0$ for some $i$ then Theorem~\ref{thm:GZ} implies
\[
b(G)\leq b(G_i)\leq \Delta(G_i)+2 <\Delta(G)+ 3\leq \Delta(G)+\lfloor t \rfloor.
\]
Assume $\chi_i\leq 0$ for all $i=1,\ldots,\ell$. By Lemma~\ref{lem:roots}, $\chi\leq\chi_i$ implies $t(\chi_i)\leq t(\chi)$. Hence
\[
b(G)\leq b(G_i)\leq\Delta(G_i)+\lfloor t(\chi_i)\rfloor
\leq \Delta(G)+\lfloor t(\chi)\rfloor.
\]
This completes the proof.
\end{proof}

\begin{corollary}\label{cor1}
Let $G$ be a graph embedded on a surface whose Euler characteristic $\chi$ is as large as possible. If $\chi\leq 0$ then $b(G)\leq\Delta(G)+1+\lfloor \sqrt{4-3\chi} \rfloor$.
\end{corollary}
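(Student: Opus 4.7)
The plan is to deduce the corollary directly from Theorem~\ref{thm1} by showing that the largest real root $t=t(\chi)$ of the cubic $C(z)=z^3+z^2+(3\chi-8)z+9\chi-12$ satisfies $t\leq 1+\sqrt{4-3\chi}$. Once this inequality is in hand, taking floors gives
\[
\lfloor t\rfloor \;\leq\; \lfloor 1+\sqrt{4-3\chi}\rfloor \;=\; 1+\lfloor\sqrt{4-3\chi}\rfloor,
\]
since $1$ is an integer, and the corollary then follows immediately from Theorem~\ref{thm1}.

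To prove $t\leq s$ where $s:=1+\sqrt{4-3\chi}$, I would invoke the characterization built into Lemma~\ref{lem:roots}: because $C(z)$ is a cubic with positive leading coefficient whose largest real root is $t$, one has $C(z)\geq 0$ for all $z\geq t$, and by the contrapositive it suffices to check that $C(s)\geq 0$. The verification is a direct substitution. Setting $u:=\sqrt{4-3\chi}$ so that $s=1+u$ and $3\chi=4-u^2$, I would expand the four summands of
\[
C(s) \;=\; (1+u)^3+(1+u)^2+(3\chi-8)(1+u)+(9\chi-12)
\]
and collect by powers of $u$. The cubic and quadratic terms in $u$ are designed to cancel, and the expansion collapses to
\[
C(s) \;=\; u-2 \;=\; \sqrt{4-3\chi}-2,
\]
which is nonnegative precisely because $\chi\leq 0$ forces $4-3\chi\geq 4$. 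This gives $C(s)\geq 0$ and hence $t\leq s$, as desired.

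I do not anticipate any substantial obstacle: the algebraic identity $C\bigl(1+\sqrt{4-3\chi}\bigr)=\sqrt{4-3\chi}-2$ is the engine of the argument, and it is short enough to be verified by hand. A convenient sanity check is the boundary case $\chi=0$, where Lemma~\ref{lem:roots} records $t(0)=3$ and where $s=1+\sqrt{4}=3$, confirming the equality $t=s$ predicted by the identity $C(s)=\sqrt{4-3\chi}-2=0$. For $\chi<0$ the inequality is strict and leaves room to spare, so the asymptotic statement in the abstract that $t\sim\sqrt{-3\chi}$ is also consistent with $s=1+\sqrt{4-3\chi}$. Thus the only work is the single substitution above, followed by applying Theorem~\ref{thm1}.
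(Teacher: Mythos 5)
Your overall route is the paper's: substitute $s=1+\sqrt{4-3\chi}$ into the cubic, conclude $t\le s$, take floors, and apply Theorem~\ref{thm1}; your key identity $C(s)=\sqrt{4-3\chi}-2$ is exactly the computation the paper makes. However, the step that deduces $t\le s$ from $C(s)\ge 0$ is not justified as written. From ``$C(z)\ge 0$ for all $z\ge t$'' the contrapositive is ``$C(z)<0\Rightarrow z<t$''; what you actually use is the converse, ``$C(z)\ge 0\Rightarrow z\ge t$,'' which is false for a general cubic: a cubic with three real roots is also nonnegative between its two smaller roots, so positivity of $C$ at $s$ does not by itself place $s$ to the right of the largest root. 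Nor is this the characterization in Lemma~\ref{lem:roots}: that lemma characterizes $z>t$ (for $z\ge 0$, $\chi\le 0$) by the simultaneous positivity of all three polynomials $A$, $B$, $C$, not of $C$ alone, and you have not checked $A(s)$ or $B(s)$.

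The gap is real but easily closed. The paper closes it by also computing $A(s)=-\chi>0$ and $B(s)=(25-12\chi)\sqrt{4-3\chi}+31-48\chi>0$ for $\chi\le -1$, so the ``all positive $\Rightarrow z>t$'' direction of Lemma~\ref{lem:roots} yields $s>t$, with the boundary case $\chi=0$ treated separately via $s=t=3$ (your sanity check). Alternatively, you can justify your one-polynomial test directly by showing $C<0$ on $[0,t)$: indeed $C(0)=9\chi-12<0$, and by Vieta the product of the roots of $C$ is $12-9\chi>0$ while the sum of pairwise products is $3\chi-8<0$, so any real roots of $C$ other than $t$ are negative; hence $C$ has no root in $[0,t)$ and stays negative there, and then $C(s)\ge 0$ together with $s\ge 0$ really does force $s\ge t$ (covering $\chi=0$, where equality holds, in one stroke). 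With either repair, the remaining steps ($\lfloor t\rfloor\le\lfloor 1+\sqrt{4-3\chi}\rfloor=1+\lfloor\sqrt{4-3\chi}\rfloor$ and Theorem~\ref{thm1}) are correct.
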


\begin{proof}
If $\chi=0$ then $1+\sqrt{4-3\chi}=3=t(\chi=0)$. If $\chi\leq-1$ then $1+\sqrt{4-3\chi}>t(\chi)$ by  Lemma~\ref{lem:roots}  and the following inequialities:
\begin{eqnarray*}
A(1+\sqrt{4-3\chi}) & = &  -\chi >0, \\
B(1+\sqrt{4-3\chi}) &=& (25-12\chi)\sqrt{4-3\chi} +31 - 48\chi > 0,\\
C(1+\sqrt{4-3\chi}) &=& \sqrt{4-3\chi}-2 >0.
\end{eqnarray*}
Hence the result follows immediately from Theorem~\ref{thm1}.
\end{proof}

\begin{remark}

Computations in \textsf{Sage} give the following formula
\[
t(\chi) =  \frac 13\left(D + (25-9\chi)/D -1  \right)
\]
where $D=\left( 9\sqrt{9\chi^3 + 69\chi^2 - 125\chi} - 108\chi + 125\right)^{\frac 13}$. Note that this formula works in $\mathbb C$, the field of complex numbers. Athough Theorem~\ref{thm1} is stronger than Corollary~\ref{cor1}, asymptotically they are equivalent by the following limit
\[
\lim_{\chi\to-\infty} t(\chi)/(1+\sqrt{4-3\chi})=1.
\]
One can check that Theorem~\ref{thm1} and Corollary~\ref{cor1} are both stronger than the previous result of Theorem~\ref{thm:H}. Asymptotically, the improvement is by a factor of $\sqrt 2$, as shown by the limit
\[
\lim_{\chi\to-\infty} \frac{1/2+ \sqrt{12-6\chi}}{1+\sqrt{4-3\chi}} =\sqrt 2.
\]
\end{remark}

\begin{example}
We give a table below to compare Theorem~\ref{thm1}  with Theorem~\ref{thm:H} for $-21\leq \chi\leq0$.
\[
\begin{tabular}{|c|ccccccccccc|}
\hline
$\chi$ & 0 & -1 & -2 & -3 & -4 & -5 & -6 & -7 & -8 & -9 & -10  \\
\hline
$\lfloor r \rfloor$ & 3 & 3 & 4 & 5 & 5 & 6 & 6 & 7 & 7 & 8 & 8\\
\hline
$\lfloor t\rfloor$ & 3 & 3 & 4 & 4 & 4 & 5 & 5 & 5 & 6 & 6 & 6\\
\hline\hline
$\chi$ &-11&-12&-13& -14 & -15 & -16 & -17 & -18 & -19 & -20 & -21\\
\hline
$\lfloor r\rfloor$ & 8 & 9 & 9 & 9 & 10 & 10 & 10 & 11 & 11 & 11 &11\\
\hline
$\lfloor t \rfloor$ &7 & 7 & 7 & 7 & 7 & 8 & 8 & 8 & 8 & 8 & 9\\
\hline
\end{tabular}
\]
\end{example}

\begin{remark}
We do not know whether the upper bound for $b(G)$ given in Theorem~\ref{thm1} is sharp for all graphs $G$ with $\chi(G)\leq 0$, or whether the weaker result for $b'(G)$ given by Lemma~\ref{lem1}  is sharp for connected graphs $G$ with $\chi(G)\leq 0$.
\end{remark}

\section{Graphs with large girth}\label{sec:girth}

Our result can be further improved when the graph $G$ has
large \emph{girth} $g(G)$, defined as the length of the
shortest cycle in $G$. If $G$ has no cycle then we set $g(G)=\infty$ by convention, and in such case one has $b(G)\leq 2$ by \cite{BHNS}.

\begin{theorem}\label{thm:girth}
Let $G$ be a graph embedded on a surface whose Euler characteristic
$\chi$ is as large as possible. If $\chi\leq0$ and $g=g(G)<\infty$,
then $b(G)\leq \Delta(G)+\lfloor s \rfloor$
where $s$ is the larger root of the quadratic polynomial
$A(z)=(g-2)z^2-4z+\chi g-g-2$, i.e.
\[
s=\frac{2+\sqrt{g^2-g(g-2)\chi}}{(g-2)}.
\]
\end{theorem}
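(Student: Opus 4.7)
The plan is to adapt the discharging argument of Lemma~\ref{lem1}, replacing the generic estimates $f(uv), f'(uv) \geq 3$ (or $\geq 4$) with the girth-based bound $f(uv), f'(uv) \geq g$ valid in any $2$-cell embedding. As in Theorem~\ref{thm1}, the connected case will carry the argument, and the disconnected case will follow by a short component reduction.

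Suppose first that $G$ is connected and, seeking a contradiction, assume $b'(G) \geq \Delta(G) + z$ for some integer $z > s$. Since $g-2 \geq 1$ makes $A(z)$ an upward-opening parabola and $\chi \leq 0$ makes its smaller root $(2-\sqrt{g^2-g(g-2)\chi})/(g-2)$ negative, the condition $z > s$ forces $A(z) > 0$. For any edge $uv$, Lemma~\ref{lem:main} combined with $f(uv), f'(uv) \geq g$ yields
\[
w(uv) \leq \frac{2}{z+1+c(u,v)} + \frac{2}{g} - 1 - \frac{4\chi}{(2z+2+c(u,v))^2}.
\]
Because $\chi \leq 0$, both the $\frac{2}{z+1+c}$ and $-\frac{4\chi}{(2z+2+c)^2}$ terms are non-increasing in $c \geq 0$, so this upper bound is largest at $c(u,v) = 0$, where a direct algebraic simplification shows that it equals $-A(z)/(g(z+1)^2) < 0$. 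Summing over edges contradicts Equation~(\ref{eq:weight}), giving $b(G) \leq b'(G) \leq \Delta(G) + \lfloor s \rfloor$.

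For disconnected $G = G_1 \cup \cdots \cup G_\ell$, additivity of orientable and non-orientable genus over components forces some $G_i$ to have $\chi(G_i) \leq 0$; any such $G_i$ is non-planar, hence contains a cycle, so $g(G_i)$ is finite and at least $g$, and the connected case applies. A short derivative check gives monotonicity of $s(g, \chi)$ decreasing in each variable, yielding
\[
b(G) \leq b(G_i) \leq \Delta(G_i) + \lfloor s(g(G_i), \chi(G_i)) \rfloor \leq \Delta(G) + \lfloor s \rfloor.
\]

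I expect the main technical point to be the face-size bound $f(uv), f'(uv) \geq g$ in the presence of bridge edges, which appear twice on the boundary of a single face; this reduces to the standard fact that every face-boundary walk in a $2$-cell embedding carries a cycle of the underlying graph. The remainder of the argument — the monotonicity collapsing the three curvature cases of Lemma~\ref{lem1} into the single worst case $c(u,v) = 0$, the sign analysis of $A(z)$, and the component reduction — is a straightforward adaptation of Lemma~\ref{lem1} and Theorem~\ref{thm1}.
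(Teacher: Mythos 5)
Your connected-case argument is correct and is essentially the paper's proof: the paper simply takes the weakest bounds from Lemma~\ref{lem:main} with $c(u,v)\geq 0$, namely $d(u),d(v)\geq z+1$, $|E(G)|\geq (z+1)^2$, $f(uv),f'(uv)\geq g$, and obtains exactly the expression your monotonicity-in-$c(u,v)$ reduction lands on, $w(uv)\leq \tfrac{2}{z+1}+\tfrac{2}{g}-1-\tfrac{\chi}{(z+1)^2}=-A(z)/\bigl(g(z+1)^2\bigr)<0$; so your case-collapsing step is a harmless detour rather than a different route. Your remark about bridges is reasonable to flag (the paper dismisses $f(uv)\geq g$ ``by definition''), though your stated justification is loose: a face-boundary walk need not carry a cycle in general (trees), but under $g<\infty$ and connectivity a boundary walk shorter than $g$ would have to double a forest, which forces $G$ itself to be that forest, so the bound does hold.

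The genuine gap is in your disconnected-case reduction. The claim that ``additivity of orientable and non-orientable genus over components forces some $G_i$ to have $\chi(G_i)\leq 0$'' is false: what is additive over components is the Euler genus $2-\chi$, and two disjoint copies of $K_5$ (or of any non-planar projective-planar graph, which can be chosen with large girth by subdividing $K_{3,3}$) give $\chi(G)=0\leq 0$ while every component has $\chi(G_i)=1$. So your dichotomy can fail, and you cannot always pass to a component with $\chi(G_i)\leq 0$. Note that you also cannot fall back on the paper's own reduction in Theorem~\ref{thm1} (bounding positive-$\chi$ components by $\Delta+2$ via Theorem~\ref{thm:GZ}), because here $\lfloor s\rfloor$ can equal $1$ when $g$ is large; the paper is itself terse on this point. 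The correct patch is easy: if every component has $\chi_i>0$, at least two are non-planar projective-planar (hence have finite girth $\geq g$), and for such a component the same curvature computation applies with the term $-\chi_i/|E(G_i)|$ now negative and simply discarded, which reduces to the $\chi=0$ polynomial and is dominated by $s(g,\chi)$ since $s$ is decreasing in $\chi$. With that replacement your component reduction, together with the monotonicity of $s$ in $g$ and $\chi$ that you already invoke, completes the proof.
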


\begin{proof}
Assume $G$ is connected for the same reason as argued in the proof of
Theorem~\ref{thm1}. It suffices to show that $b(G)<\Delta(G)+z$ for
any positive integer $z$ satisfying $A(z)>0$.

Suppose to the contrary that $b(G)\geq \Delta(G)+z$ for some positive integer $z$ satisfying $A(z)>0$. Let $uv$ be an arbitrary edge of $G$. By Lemma~\ref{lem:main}, $\min\{d(u),d(v)\}\geq z+1$ and $|E(G)|\geq (z+1)^2$. One also has $f(uv)\geq g$ and $f'(uv)\geq g$ by definition. Hence
\begin{eqnarray*}
w(uv) &\leq & \frac 2{z+1}+\frac2g-1-\frac{\chi}{(z+1)^2} \\
&=& -\frac{(g-2)z^2-4z+\chi g-g-2}{g(z+1)^2}<0
\end{eqnarray*}
where the last inequality follows from $A(z)>0$.
This contradicts Equation (\ref{eq:weight}).
\end{proof}

The first author~\cite[Proposition~10]{Huang} showed that, with the same conditions as Theorem~\ref{thm:girth},
\[
b(G)\leq \Delta(G)+ \left\lfloor \frac{\sqrt{8g(2-g)\chi+(3g-2)^2}-(g-6)}{2(g-2)} \right\rfloor.
\]
It is not hard to check that Theorem~\ref{thm:girth} improves this result.

\begin{example}
One has
\[
b(G)\leq \Delta(G)+
\begin{cases}
2, & {\rm if}\ \chi=0,\ g\geq 5,\\
1, & {\rm if}\ \chi=0,\ g\geq 7,\\
2, & {\rm if}\ \chi=-1,\ g\geq 5,\\
3, & {\rm if}\ \chi=-2,\ g\geq 4, \\
2, & {\rm if}\ \chi=-2,\ g\geq 6.
\end{cases}
\]
\end{example}

\begin{corollary}\label{cor:girth}
Let $G$ be a triangle-free graph embedded on a surface whose Euler characteristic $\chi$ is as large as possible. If $\chi\leq0$ then $b(G)\leq \Delta(G)+1+ \lfloor \sqrt{4-2\chi} \rfloor$.
\end{corollary}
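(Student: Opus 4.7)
The plan is to derive the corollary as a direct specialization of Theorem~\ref{thm:girth}, after handling the acyclic case separately.

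First, if $G$ has no cycle then $g(G)=\infty$ and $b(G)\leq 2$ by the result from \cite{BHNS} cited at the start of this section. Since any nonempty graph has $\Delta(G)\geq 1$ and any $\chi\leq 0$ gives $1+\lfloor\sqrt{4-2\chi}\rfloor\geq 3$, the claimed inequality holds trivially in this case.

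Otherwise set $g:=g(G)<\infty$; triangle-freeness forces $g\geq 4$, so Theorem~\ref{thm:girth} yields $b(G)\leq \Delta(G)+\lfloor s(g)\rfloor$ with $s(g)=(2+\sqrt{g^2-g(g-2)\chi})/(g-2)$. A direct evaluation gives $s(4)=1+\sqrt{4-2\chi}$, so it suffices to verify the monotonicity bound $s(g)\leq s(4)$ for all integers $g\geq 4$ and all $\chi\leq 0$. Once that is established, it yields $\lfloor s(g)\rfloor \leq \lfloor 1+\sqrt{4-2\chi}\rfloor = 1+\lfloor\sqrt{4-2\chi}\rfloor$, which is precisely the desired bound.

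To establish this monotonicity, I would clear denominators to rewrite $s(g)\leq 1+\sqrt{4-2\chi}$ as the equivalent inequality $\sqrt{g^2-g(g-2)\chi}\leq (g-4)+(g-2)\sqrt{4-2\chi}$, whose right-hand side is nonnegative for $g\geq 4$. Squaring and simplifying, the difference of squares should factor as $(g-4)\bigl[\chi(g-2)-(3g-4)\bigr]$, which is nonpositive since $\chi\leq 0$ and $g\geq 4$ force the bracketed factor to be at most $-(3g-4)<0$. This algebraic factorization is the only step requiring any attention, and I do not expect a genuine obstacle. An arguably cleaner alternative route is to mimic the proof of Theorem~\ref{thm:girth} verbatim while using triangle-freeness to bound $f(uv),f'(uv)\geq 4$ directly, thereby bypassing the monotonicity check in $g$ altogether.
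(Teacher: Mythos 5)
Your proof is correct and follows essentially the same route as the paper, which simply asserts that the bound of Theorem~\ref{thm:girth} is decreasing in $g$ and takes $g=4$ (your separate treatment of the acyclic case is harmless but in fact vacuous, since a forest has $\chi(G)=2>0$). One small correction in the monotonicity check: after squaring, the difference of squares is $(g-2)(g-4)\bigl(\chi-4-2\sqrt{4-2\chi}\bigr)$, not $(g-4)\bigl[\chi(g-2)-(3g-4)\bigr]$ --- equivalently, dropping the nonnegative cross term $2(g-4)(g-2)\sqrt{4-2\chi}$ reduces the task to $(g-2)(g-4)(\chi-4)\leq 0$ --- but either way the quantity is nonpositive for $g\geq 4$ and $\chi\leq 0$, so your argument goes through as intended.
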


\begin{proof}
One can check that the upper bound for $b(G)$ provided by the previous proposition is a decreasing function of $g\geq3$. Hence taking $g=4$ gives the desired result.
\end{proof}

\begin{remark}
We do not know whether the upper bounds for $b(G)$ given by Theorem~\ref{thm:girth} and Corollary~\ref{cor:girth} are sharp, but one can check that they are actually upper bounds for $b'(G)$ as long as $G$ is connected. When is $G$ connected and triangle-free, Corollary~\ref{cor:girth} implies
\begin{equation}\label{eq:b'g4}
b'(G)\leq \Delta(G)+1+\sqrt{4-2\chi}
\end{equation}
This bound is indeed sharp. For example, let $G$ be the complete bipartite graph $K_{n,n}$, which is triangle-free. One sees that $b'(G)=2n-1$ and $\Delta(G)=n$. One also has $\chi(G)={(4n-n^2)}/2$ by Ringel~\cite{Ringel1,Ringel2}. Hence the quality in (\ref{eq:b'g4}) holds for $G=K_{n,n}$. On the other hand, for $G=K_{n,n}$ with $n\geq 2$ one can check that $\gamma(G)=2$ and $b(G)=n<2n-1$. So it is not clear whether the upper bound $b(G)\leq \Delta(G)+1+\sqrt{4-2\chi}$ is sharp for triangle-free graphs.
\end{remark}

\section{Connected graphs with large order}\label{sec:order}

The order $|V(G)|$ of a connected nontrivial graph $G$ has the following lower bound in terms of its Euler characteristic $\chi(G)$.

\begin{proposition}[\cite{GZ2}]\label{prop:order}
Let $G$ be a connected graph with $n=|V(G)|\geq 2$ embedded on a surface whose Euler characteristic $\chi$ is as large as possible. Then $n\geq(3+\sqrt{17-8\chi})/2$.
\end{proposition}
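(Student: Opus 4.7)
The plan is to apply Euler's Formula directly to a $2$-cell embedding of $G$ on the surface of maximum Euler characteristic, combined with the two simplest bounds available on the number of edges and the number of faces.

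By the discussion preceding Euler's Formula in the introduction, the connected graph $G$ admits a $2$-cell embedding on a surface $S$ with $\chi(S)=\chi$. Writing $m=|E(G)|$ and $f=|F(G)|$ in this embedding, Euler's Formula gives $n-m+f=\chi$. I would then invoke two elementary bounds: first, since $S$ is a compact connected surface and the $2$-cell embedding has faces homeomorphic to open disks that exhaust $S\setminus G$, one has $f\geq 1$; second, since $G$ is a simple graph on $n\geq 2$ vertices, $m\leq\binom{n}{2}=n(n-1)/2$.

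Substituting these into Euler's Formula yields
\[
\chi \;=\; n - m + f \;\geq\; n - \frac{n(n-1)}{2} + 1 \;=\; \frac{3n + 2 - n^2}{2},
\]
which rearranges to the quadratic inequality $n^2-3n+(2\chi-2)\geq 0$. The roots of this quadratic are $(3\pm\sqrt{17-8\chi})/2$. Since $\chi\leq 2$ implies $\sqrt{17-8\chi}\geq 1$, the smaller root is at most $1$, so the hypothesis $n\geq 2$ rules out that branch and forces $n$ to lie above the larger root, i.e. $n\geq(3+\sqrt{17-8\chi})/2$, as claimed.

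There is essentially no hard step in this proof; the only subtleties worth checking are that a $2$-cell embedding on a compact surface automatically has $f\geq 1$ and that the condition $n\geq 2$ selects the correct root of the quadratic. Everything else is bookkeeping of Euler's Formula together with the trivial edge count for a simple graph.
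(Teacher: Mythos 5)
Your proof is correct. The paper itself does not reprove this proposition (it is quoted from \cite{GZ2}), and your argument---Euler's formula for a $2$-cell embedding of the connected graph $G$ on the maximal-$\chi$ surface, combined with the trivial bounds $|F(G)|\geq 1$ and $|E(G)|\leq\binom{n}{2}$, then using $n\geq 2$ together with $\chi\leq 2$ to discard the smaller root of the resulting quadratic---is exactly the standard derivation behind the cited bound, with all the minor points ($2$-cell embeddability, simplicity of $G$, root selection) handled correctly.
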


In this section we assume $|V(G)|\geq -\chi$ and obtain asymptotically better upper bounds for the bondage number $b(G)$.

\begin{lemma}
Let $\chi\leq0$ and $n\geq1$. Then the following inequalities
\begin{equation}\label{ineq4}
A(z)=nz-3n+4\chi>0,
\end{equation}
\begin{equation}\label{ineq5}
B(z)=10nz^2 - (13n - 48\chi)z - 42n + 96\chi>0,
\end{equation}\begin{equation}\label{ineq6}
C(z)=nz^2 - (n - 6\chi)z - 6n + 18\chi>0
\end{equation}
are all valid if and only if
$
z>  1/2 - {3\chi}/{n} + \sqrt{25/4 - 21\chi/n + 9\chi^2/n^2}.
$
\end{lemma}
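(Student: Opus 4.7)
The plan is to identify $t^{*} := \tfrac12 - 3\chi/n + \sqrt{25/4 - 21\chi/n + 9\chi^2/n^2}$ as the larger real root of the upward-opening quadratic $C(z)$, and to show that it dominates the other two constraints in both directions. The quadratic formula applied to $C(z)=nz^2-(n-6\chi)z-6n+18\chi$ yields $t^{*}$ directly, with discriminant $25n^2 - 84 n \chi + 36\chi^2$; let $t''$ denote the smaller root. A useful preliminary observation is $C(3) = 36\chi \leq 0$, which gives $t'' \leq 3 \leq t^{*}$.

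For the ``only if'' direction I plan to show that $A(z)>0$ together with $C(z)>0$ already forces $z > t^{*}$, so the assumption $B(z)>0$ is not needed here. Since $A$ is linear, $A(z)>0$ amounts to $z > z_A := 3 - 4\chi/n$, and a direct computation gives
\[
z_A - t'' \;=\; \tfrac{5}{2} - \chi/n + \sqrt{25/4 - 21\chi/n + 9\chi^2/n^2} \;\geq\; \tfrac{5}{2} \;>\; 0
\]
whenever $\chi\leq 0$. Combined with the dichotomy ``$z<t''$ or $z>t^{*}$'' from $C(z)>0$, the inequality $z > z_A > t''$ rules out the first alternative and yields $z>t^{*}$.

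For the ``if'' direction, $C(z)>0$ is automatic for $z>t^{*}$. Evaluating $C$ at $z_A$ yields $C(z_A)=(8\chi/n)(2n-\chi) \leq 0$, so $z_A$ lies in $[t'', t^{*}]$, giving $z > t^{*} \geq z_A$ and hence $A(z)>0$. For $B(z)$ I would eliminate the quadratic term via $nz^2 = C(z) + (n-6\chi)z + 6n - 18\chi$, rewriting
\[
B(z) \;=\; 10\,C(z) - 3(n+4\chi)z + 6(3n-14\chi),
\]
and then split on the sign of $n+4\chi$. When $n+4\chi\leq 0$, the linear remainder is nonnegative for $z\geq 0$, so $B(z)>10\,C(z)>0$ (using $t^{*}\geq 3>0$). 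When $n+4\chi>0$, equivalently $\chi > -n/4$, I would first verify $B(t^{*})\geq 0$: via $C(t^{*})=0$ this reduces to $(n+4\chi)\,t^{*} \leq 6n - 28\chi$, and squaring after substituting the closed form of $t^{*}$ collapses to the inequality $8n(12n^3 - 163n^2\chi + 460n\chi^2 - 192\chi^3) \geq 0$, which is manifest term-by-term for $\chi\leq 0$. Finally, $B'(t^{*}) = 20n\,t^{*} - 13n + 48\chi \geq 47n + 48\chi > 35n > 0$ in this sub-case (using $t^{*}\geq 3$ and $\chi > -n/4$), so $B$ is strictly increasing on $[t^{*},\infty)$ and $B(z)>B(t^{*})\geq 0$ for $z>t^{*}$.

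The main obstacle is the verification $B(t^{*})\geq 0$ in the sub-case $n+4\chi>0$: after squaring, the polynomial difference of degree four in $n$ and $\chi$ must be regrouped to expose the correct signs, and this is where essentially all of the algebra lives. Everything else reduces to sign analysis and root comparisons for quadratics.
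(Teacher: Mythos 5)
Your proof is correct; I checked the key computations ($C(3)=36\chi$, $C(z_A)=8\chi(2n-\chi)/n$, the decomposition $B(z)=10C(z)-3(n+4\chi)z+6(3n-14\chi)$, and the squared inequality collapsing to $8n(12n^3-163n^2\chi+460n\chi^2-192\chi^3)\geq 0$), and they all hold. Your overall skeleton matches the paper's: both identify $c=t^*$ as the larger root of $C$, note that the relevant ``other'' roots lie below it (the paper observes $c'<0<c$ and uses $A>0$ to force $z>3>0$, you observe $z_A>t''$), and conclude the equivalence from the sign pattern of upward-opening polynomials. The genuine difference is in how $B$ is handled: the paper computes the roots of $B$ explicitly and sandwiches $\sqrt{\beta}$ between $43n-48\chi$ and $43n-2544\chi/43$ to get $b\leq 2.8-5.4\chi/n$, then shows $c\geq 3-6\chi/n\geq\max\{a,b\}$ via $\sqrt{\gamma}\geq 5n-6\chi$; you avoid $B$'s roots entirely by writing $B$ as $10C$ plus a linear term, splitting on the sign of $n+4\chi$, and verifying $B(t^*)\geq 0$ together with $B'(t^*)>0$. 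The paper's route is shorter but leans on somewhat ad hoc numerical estimates; yours is more systematic (and mirrors the paper's own $B-16C$ trick from its Lemma 2.2 for the general case) at the price of a case split and a heavier quartic verification. One small presentational point: when you square $(n+4\chi)\sqrt{\gamma}\leq 11n^2-54n\chi+24\chi^2$, state explicitly that both sides are nonnegative (which they are, since $n+4\chi>0$ in that sub-case and $\chi\leq 0$), so that squaring is an equivalence.
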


\begin{proof}
The graph of $A(z)$ is an upward straight line with a unique root $a = 3 - 4\chi/n\geq 3$. The graph of $B(z)$ is an upward parabola with two roots
\[
b = \frac{1}{20n}(13n - 48\chi + \sqrt{\beta}),\quad b' = \frac{1}{20n}(13n - 48\chi - \sqrt{\beta})
\]
where $\beta = (43n)^2 - 5088n\chi + (48\chi)^2$. One sees that
\[
 43n-48\chi \leq \sqrt\beta \leq 43n-2544\chi/43.
\]
Hence $b'<0<b\leq 2.8-5.4\chi/n$. The graph of $C(z)$ is an upward parabola with two roots
\[
c=\frac{1}{2n}(n - 6\chi + \sqrt{\gamma}),\quad c'=\frac{1}{2n}(n - 6\chi - \sqrt{\gamma})
\]
where $\gamma=25n^2 - 84n\chi + 36\chi^2$. One sees that $\gamma\geq 5n-6\chi$. Hence $c'<0< c$ and
\[
c\geq 3-6\chi/n \geq \max\{a,b\}.
\]
Hence $A(z)$, $B(z)$, and $C(z)$ are all positive if and only if $z>c$.
\end{proof}

\begin{theorem}\label{thm:order}
Let $G$ be a connected graph embedded on a surface whose Euler characteristic $\chi$ is as large as possible. Let $n=|V(G)|$ and assume $\chi\leq0$. Then $b(G) \leq b'(G)\leq \Delta(G) + \lfloor c \rfloor$ where
\[
c = 1/2 - {3\chi}/{n} + \sqrt{25/4 - 21\chi/n + 9\chi^2/n^2}.
\]
\end{theorem}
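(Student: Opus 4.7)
The plan is to follow the same architecture as the proof of Lemma~\ref{lem1} and Theorem~\ref{thm1}, with one key modification: since $n=|V(G)|$ is now a fixed parameter in the statement, I would use the \emph{intermediate} inequality $|E(G)|\geq n(2z+2+c(u,v))/4$ from Lemma~\ref{lem:main} rather than the looser quadratic bound $|E(G)|\geq (2z+2+c(u,v))^2/4$. The bound $b(G)\leq b'(G)$ is immediate from Lemma~\ref{lem:HR} and Theorem~\ref{thm:ad}, so the real content is $b'(G)\leq \Delta(G)+\lfloor c\rfloor$.

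Assuming for contradiction that $b'(G)\geq \Delta(G)+z$ for some integer $z>c$, the preceding lemma guarantees that all three of $A(z)$, $B(z)$, $C(z)$ are positive. I would then fix an arbitrary edge $uv$, assume $d(u)\leq d(v)$ and $f(uv)\leq f'(uv)$, and split into three cases according to $c(u,v)=|N(u)\cap N(v)|$. The face-length bounds needed are exactly those used in Lemma~\ref{lem1}: if $c(u,v)=0$ no face incident with $uv$ can be a triangle (so $f(uv),f'(uv)\geq 4$); if $c(u,v)=1$ at most one such face can be a triangle (so $f(uv)\geq 3$, $f'(uv)\geq 4$); and if $c(u,v)\geq 2$ one only has $f(uv),f'(uv)\geq 3$.

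In each case, combining Lemma~\ref{lem:main} (giving $\min\{d(u),d(v)\}\geq z+1+c(u,v)$ and $|E(G)|\geq n(2z+2+c(u,v))/4$) with the face-length bounds and plugging into the curvature formula
\[
w(uv)=\frac{1}{d(u)}+\frac{1}{d(v)}-1+\frac{1}{f(uv)}+\frac{1}{f'(uv)}-\frac{\chi}{|E(G)|}
\]
should yield, after clearing denominators, the estimates
\[
w(uv)\leq -\frac{A(z)}{2n(z+1)},\quad w(uv)\leq -\frac{B(z)}{12n(z+2)(2z+3)},\quad w(uv)\leq -\frac{C(z)}{3n(z+3)(z+2)}
\]
in the three cases respectively. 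Each is strictly negative by our hypothesis, so $w(uv)<0$ for every edge $uv$, contradicting Equation~(\ref{eq:weight}). Hence $b'(G)<\Delta(G)+z$ for every integer $z>c$, giving $b'(G)\leq\Delta(G)+\lfloor c\rfloor$.

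The main step requiring care is the algebraic verification that the three weight-bound numerators collapse exactly to $-A(z)$, $-B(z)$, $-C(z)$ after multiplying through by the appropriate common denominators; this is why the preceding lemma was packaged in precisely the form it was. Everything else --- the face-length bookkeeping, the reduction via $b'(G)$, and the case split on $c(u,v)$ --- is mechanical and directly parallels Lemma~\ref{lem1}.
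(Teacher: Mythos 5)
Your proposal matches the paper's own proof of Theorem~\ref{thm:order} essentially step for step: the same contradiction setup with an integer $z>c$, the same use of Lemma~\ref{lem:main} with the intermediate bound $|E(G)|\geq n(2z+2+c(u,v))/4$, the same face-length estimates in the three cases on $c(u,v)$, and the same collapsed numerators $-A(z)$, $-B(z)$, $-C(z)$ with exactly the denominators $2n(z+1)$, $12n(z+2)(2z+3)$, $3n(z+2)(z+3)$, leading to $w(uv)<0$ for every edge and a contradiction with Equation~(\ref{eq:weight}). It is correct and takes the same route as the paper.
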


\begin{proof}
It suffices to show that $b'(G)< \Delta(G)+z$ for any integer $z>c$. Suppose to the contrary that $b'(G)\geq \Delta(G)+z$ for some integer $z>c$. Let $uv$ be an arbitrary edge in $G$. Assume $d(u)\leq d(v)$ and $f(uv)\leq f'(uv)$, without loss of generality. Let $|N(u)\cap N(v)|=c(u,v)$. By Lemma~\ref{lem:main}, $d(u)\geq z+1+c(u,v)$ and $|E(G)|\geq n(2z+2+c(u,v))/4$.

If $c(u,v)=0$ then $d(v)\geq d(u)\geq z+1$, $f'(uv)\geq f(uv)\geq4$, and $|E(G)|\geq n(z+1)/2$. Thus
\begin{eqnarray*}
w(uv) &\leq & \frac 2{z+1}+\frac14+\frac14 -1
-\frac{2\chi}{n(z+1)} \\
&=& -\frac{nz-3n+4\chi}{2n(z+1)} <0
\end{eqnarray*}
where the last inequality follows from (\ref{ineq4}).

If $c(u,v)=1$ then $d(v)\geq d(u)\geq z+2$, $f'(uv)\geq 4$, $f(uv)\geq 3$, and $|E(G)|\geq n(2z+3)/4$. Thus
\begin{eqnarray*}
w(uv) & \leq & \frac2{z+2}+\frac14+\frac13-1-
\frac{4\chi}{n(2z+3)} \\
&=& -\frac{10nz^2 - (13n - 48\chi)z - 42n + 96\chi}
{12n(z+2)(2z+3)}<0
\end{eqnarray*}
where the last inequality follows from (\ref{ineq5}).

If $c(u,v)\geq 2$ then $d(v)\geq d(u)\geq z+3$, $f'(uv)\geq f(uv)\geq 3$, and $|E(G)|\geq n(z+2)/2$. Thus
\begin{eqnarray*}
w(uv)&\leq& \frac2{z+3}+\frac13+\frac13-1
-\frac{2\chi}{n(z+2)}\\
&=&-\frac{nz^2 - (n - 6\chi)z - 6n + 18\chi}{3n(z+2)(z+3)}<0
\end{eqnarray*}
where the last inequality follows from (\ref{ineq6}).

Therefore $w(uv)<0$ for all edges $uv$ in $G$.
This contradicts Equation (\ref{eq:weight}).
\end{proof}

\begin{corollary}\label{cor:order}
Let $G$ be a connected graph embedded on a surface whose Euler characteristic $\chi$ is as large as possible. Suppose that $\chi\leq0$ and $n=|V(G)|$. Then
\begin{itemize}
\item
$b(G)\leq b'(G)\leq \Delta(G)+9$ if $n\geq-\chi$,
\item
$b(G)\leq b'(G)\leq \Delta(G)+6$ if $n\geq -2\chi$,
\item
$b(G)\leq b'(G)\leq \Delta(G)+5$ if $n\geq -3\chi$,
\item
$b(G)\leq b'(G)\leq \Delta(G)+4$ if $n\geq -4\chi$,
\item
$b(G)\leq b'(G)\leq \Delta(G)+3$ if $n\geq -8\chi$.
\end{itemize}
\end{corollary}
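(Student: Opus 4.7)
The plan is to deduce the corollary as a direct specialization of Theorem~\ref{thm:order}. Writing $x = -\chi/n \geq 0$, the bound becomes $b(G) \leq b'(G) \leq \Delta(G) + \lfloor c \rfloor$ with
\[
c = \tfrac12 + 3x + \sqrt{\tfrac{25}{4} + 21x + 9x^2}.
\]
I would first observe that $c$ is a strictly increasing function of $x \geq 0$ (both the linear term $3x$ and the radicand are increasing in $x$), so $c$ is a strictly decreasing function of $n$ for fixed $\chi \leq 0$. Consequently, in each of the five cases, the supremum of $c$ over the allowed range of $n$ is attained at the smallest permissible $n$, i.e.\ when $-\chi/n$ attains its stated maximum value.

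Next, I would treat the five cases uniformly by substituting $x = 1, \tfrac12, \tfrac13, \tfrac14, \tfrac18$ in turn. For instance, when $n \geq -\chi$ one has $x \leq 1$ and hence
\[
c \leq \tfrac12 + 3 + \sqrt{\tfrac{25}{4} + 21 + 9} = \tfrac72 + \tfrac12\sqrt{145} < 10,
\]
giving $\lfloor c \rfloor \leq 9$. When $n \geq -2\chi$, $x \leq \tfrac12$ yields $c \leq 2 + \sqrt{19} < 7$; when $n \geq -3\chi$, $x \leq \tfrac13$ yields $c \leq \tfrac32 + \tfrac12\sqrt{57} < 6$; when $n \geq -4\chi$, $x \leq \tfrac14$ yields $c \leq \tfrac54 + \tfrac14\sqrt{193} < 5$; and when $n \geq -8\chi$, $x \leq \tfrac18$ yields $c \leq \tfrac78 + \tfrac18\sqrt{577} < 4$. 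Each inequality can be verified by squaring to clear the radical and comparing integers.

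The only real task here is the numerical verification, and there is no genuine obstacle: monotonicity in $x$ reduces the problem to five elementary inequalities, each a matter of checking that a surd is less than a specific rational. A compact way to present the proof is to record a single table of the five values of $x$, the resulting upper bound for $c$, and its integer floor, and then invoke Theorem~\ref{thm:order} once to conclude. This structure also makes it transparent that, should one wish, analogous bounds for any linear regime $n \geq -k\chi$ can be read off by the same substitution.
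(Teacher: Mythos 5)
Your proposal is correct and follows essentially the same route as the paper: both reduce to Theorem~\ref{thm:order} by bounding $-\chi/n$ above by $1/d$ for $d=1,2,3,4,8$ (your $x\leq 1,\tfrac12,\tfrac13,\tfrac14,\tfrac18$), using monotonicity of $c$ in $-\chi/n$, and then checking the five elementary surd inequalities to get the stated floors. Your numerical verifications ($\sqrt{145}<13$, $\sqrt{19}<5$, $\sqrt{57}<9$, $\sqrt{193}<15$, $\sqrt{577}<25$) are all valid, so the argument is complete.
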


\begin{proof}
Suppose that $n\geq -d \chi$ for some $d>0$. Then
\[
c = \frac 12 - \frac{3\chi}{n} + \sqrt{ \frac{25}{4} - \frac{21\chi}{n} + \frac{9\chi^2}{n^2} } \leq \frac 12+\frac3d + \sqrt{\frac{25}4+\frac{21}{d}+\frac{9}{d^2}}
\]
and Theorem~\ref{thm:order} implies that
\[
b(G)\leq \Delta(G)+\lfloor c \rfloor \leq \Delta(G) + \left\lfloor \frac 12+\frac3d + \sqrt{\frac{25}4+\frac{21}{d}+\frac{9}{d^2}} \right\rfloor.
\]
Taking $d=1,2,3,4,8$ gives the desired upper bounds.
\end{proof}

Theorem~\ref{thm:order} and Corollary~\ref{cor:order} asymptotically improve a result of Gagarin and Zverovich~\cite[Corollary~17, Corollary~19]{GZ2} (see Theorem~\ref{thm:GZ2}).

\section{Connected graphs with large size}\label{sec:size}

Using Euler's formula, Proposition~\ref{prop:order}, and the fact that $|F(G)|\geq1$, one obtains a lower bound
\[
|E(G)| \geq \frac52-\chi+\frac12\sqrt{17-8\chi(G)}
\]
for the size of a connected nontrivial graph $G$ in terms of its Euler characteristic $\chi(G)$. In this section we assume $|E(G)|>-3\chi(G)$ and obtain better upper bounds for the bondage number $b(G)$.

\begin{lemma}
Let $\chi\leq0$ and $m>-3\chi$. Then the following inequalities
\begin{equation}\label{ineq7}
A(z)=(m +2\chi)z - 3m +2\chi >0,
\end{equation}
\begin{equation}\label{ineq8}
B(z)=(5m + 12\chi)z - 14m + 24\chi >0,
\end{equation}\begin{equation}\label{ineq9}
C(z)= (m + 3\chi)z - 3m + 9\chi >0
\end{equation}
are all valid if and only if $z>3 - 18\chi/(m+3\chi)$.
\end{lemma}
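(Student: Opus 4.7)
The plan is to observe that $A(z)$, $B(z)$, and $C(z)$ are all linear functions of $z$ and to show that under the hypotheses $\chi\leq 0$ and $m>-3\chi$ each of them has a positive slope, so that each inequality is equivalent to $z$ exceeding the unique root of the corresponding polynomial. Then the three inequalities hold simultaneously if and only if $z$ exceeds the \emph{largest} of these three roots, and the goal reduces to identifying that largest root as the root of $C(z)$ and verifying that this root equals the number $3-18\chi/(m+3\chi)$ stated in the lemma.

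First I would check the positivity of the three leading coefficients. Since $\chi\leq0$, one has $m+2\chi\geq m+3\chi>0$ by hypothesis. For $B(z)$, writing $5m+12\chi=5(m+3\chi)-3\chi$ shows positivity since both $5(m+3\chi)>0$ and $-3\chi\geq0$. Thus all three functions are strictly increasing in $z$, and solving each linear inequality yields the three roots
\[
\alpha=\frac{3m-2\chi}{m+2\chi},\quad \beta=\frac{14m-24\chi}{5m+12\chi},\quad \gamma=\frac{3m-9\chi}{m+3\chi}.
\]
A direct manipulation $3m-9\chi=3(m+3\chi)-18\chi$ rewrites $\gamma=3-18\chi/(m+3\chi)$, matching the threshold claimed in the lemma.

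Next I would verify that $\gamma\geq\alpha$ and $\gamma\geq\beta$. Since all denominators are positive, these reduce to polynomial inequalities. For the first, clearing denominators yields
\[
(3m-9\chi)(m+2\chi)-(3m-2\chi)(m+3\chi)=-2\chi(5m+6\chi),
\]
which is nonnegative because $\chi\leq0$ and $5m+6\chi=5(m+3\chi)-9\chi>0$. For the second, the analogous computation gives
\[
(3m-9\chi)(5m+12\chi)-(14m-24\chi)(m+3\chi)=m^{2}-27m\chi-36\chi^{2};
\]
writing $p=-\chi\geq0$ and $m>3p$, this expression equals $m^{2}+27mp-36p^{2}$, which is already positive at $m=3p$ (it equals $54p^{2}$) and is increasing in $m$ for $m\geq3p$, so it is positive throughout the admissible range.

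Having established that $\gamma$ dominates the other two roots, I conclude that the simultaneous system $A(z)>0$, $B(z)>0$, $C(z)>0$ is equivalent to $z>\gamma=3-18\chi/(m+3\chi)$. The main obstacle is purely bookkeeping: making sure the sign assumptions $\chi\leq0$ and $m>-3\chi$ are used at each step to guarantee positivity of the denominators and slopes, so that the linear inequalities really do reverse in the expected direction. No deeper idea beyond the comparison of three linear thresholds is required.
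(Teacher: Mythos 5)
Your proposal is correct and follows essentially the same route as the paper: both treat $A$, $B$, $C$ as increasing linear functions of $z$ (using $m>-3\chi$ and $\chi\leq0$), compute the three roots, and show the root of $C$, namely $3-18\chi/(m+3\chi)$, dominates the other two via the same two polynomial comparisons ($-2\chi(5m+6\chi)\geq0$ and $m^2-27m\chi-36\chi^2>0$). Your handling of the boundary case $\chi=0$ (stating $\gamma\geq\alpha$ rather than strict inequality) is in fact slightly more careful than the paper's, but the argument is the same.
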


\begin{proof}
Since $m>-3\chi$, one sees that $A(z)$, $B(z)$, and $C(z)$ are all upward straight lines whose roots are%, respectively,
\[
a=3 - \frac{8\chi}{m + 2\chi},\quad b = \frac{14}5 - \frac{288\chi}{5(5m+12\chi)},\quad c = 3 - \frac{18\chi}{m+3\chi}.
\]
One can check that
\[
c-a = \frac{-2\chi(5m+6\chi)}{(m+2\chi)(m+3\chi)}>0,
\]
\[
c-b = \frac{(m+3\chi)(m-12\chi)-18m\chi}{(5m+12\chi)(m+3\chi)}>0.
\]
Hence $A(z)$, $B(z)$, and $C(z)$ are all positive if and only if $z>c$.
\end{proof}

\begin{theorem}\label{thm4}
Let $G$ be a connected graph embedded on a surface whose Euler characteristic $\chi$ is as large as possible. Suppose that $m=|E(G)|>-3\chi\geq0$. Then $b(G) \leq b'(G) \leq \Delta(G) + \lfloor c \rfloor$ where  $c = 3 - 18\chi/(m+3\chi)$.
\end{theorem}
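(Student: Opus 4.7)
The plan is to mimic the proofs of Lemma~\ref{lem1} and Theorem~\ref{thm:order}, the sole difference being that the size bound $|E(G)|=m$ is substituted directly into the curvature formula instead of an order-based bound of the form $|E(G)|\geq n(2z+2+c(u,v))/4$. Since $G$ is already assumed connected, no component reduction is necessary. I argue by contradiction: suppose $b'(G)\geq\Delta(G)+z$ for some integer $z>c$, and show that $w(uv)<0$ for every edge $uv$ of $G$, contradicting Equation~(\ref{eq:weight}).

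Given an arbitrary edge $uv$, I assume without loss of generality that $d(u)\leq d(v)$ and $f(uv)\leq f'(uv)$, and write $c(u,v)=|N(u)\cap N(v)|$. By Lemma~\ref{lem:main}, $d(u)\geq z+1+c(u,v)$. I then split into three cases according to the value of $c(u,v)$, using exactly the same face-length lower bounds as in the proof of Theorem~\ref{thm:order}: if $c(u,v)=0$ then $f(uv),f'(uv)\geq 4$; if $c(u,v)=1$ then $f(uv)\geq 3$ and $f'(uv)\geq 4$ (the two faces incident to $uv$ cannot both be triangles, as that would force a second common neighbor); and if $c(u,v)\geq 2$ then $f(uv),f'(uv)\geq 3$. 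Plugging these bounds, together with $|E(G)|=m$, into the definition of $w(uv)$ and clearing denominators produces an upper bound for $w(uv)$ that is proportional (with a positive denominator) to $-A(z)$, $-B(z)$, or $-C(z)$ in the three cases respectively, where $A,B,C$ are the linear polynomials of the preceding lemma.

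Under the hypothesis $m>-3\chi$ and $\chi\leq0$, one checks that $m+2\chi$, $5m+12\chi$, and $m+3\chi$ are all strictly positive, so each denominator produced by the clearing step is positive. The previous lemma tells us that $A(z),B(z),C(z)$ are simultaneously positive precisely when $z>c$, so in every case $w(uv)<0$, yielding the desired contradiction with Equation~(\ref{eq:weight}). The conclusion $b(G)\leq b'(G)\leq \Delta(G)+\lfloor c\rfloor$ then follows by Lemma~\ref{lem:HR} and Theorem~\ref{thm:ad}, exactly as in Theorem~\ref{thm:order}.

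I do not expect any genuine obstacle. The only step requiring care is the algebraic verification that the three curvature bounds simplify \emph{on the nose} to $-A(z)/[2m(z+1)]$, $-B(z)/[12m(z+2)]$, and $-C(z)/[3m(z+3)]$; this mirrors the analogous simplifications in the proof of Theorem~\ref{thm:order} with the substitution $n(2z+2+c(u,v))/4\mapsto m$, and amounts to routine bookkeeping.
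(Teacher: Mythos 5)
Your proposal is correct and follows essentially the same route as the paper: the same contradiction setup via Lemma~\ref{lem:main}, the same three cases on $c(u,v)$ with the same face-length bounds, and the same simplifications $-A(z)/[2m(z+1)]$, $-B(z)/[12m(z+2)]$, $-C(z)/[3m(z+3)]$ leading to a contradiction with Equation~(\ref{eq:weight}).
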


\begin{proof}
It suffices to show that $b'(G)< \Delta(G)+z$ for any integer $z>c$. Suppose to the contrary that $b'(G)\geq \Delta(G)+z$ for some integer $z>c$. Let $uv$ be an arbitrary edge in $G$. Assume $d(u)\leq d(v)$ and $f(uv)\leq f'(uv)$, without loss of generality. Let $|N(u)\cap N(v)|=c(u,v)$. By Lemma~\ref{lem:main}, one has $d(u)\geq z+1+c(u,v)$.

If $c(u,v)=0$ then $d(v)\geq d(u)\geq z+1$ and $f'(uv)\geq f(uv)\geq4$. Thus
\begin{eqnarray*}
w(uv) &\leq & \frac 2{z+1}+\frac14+\frac14 -1 -\frac\chi m \\
&=& -\frac{(m +2\chi)z - 3m +2\chi}{2m(z + 1)} <0
\end{eqnarray*}
where the last inequality follows from (\ref{ineq7}).

If $c(u,v)=1$ then $d(v)\geq d(u)\geq z+2$, $f'(uv)\geq 4$, $f(uv)\geq 3$. Thus
\begin{eqnarray*}
w(uv) & \leq & \frac2{z+2}+\frac14+\frac13-1 -\frac\chi m\\
&=& -\frac{(5m + 12\chi)z - 14m + 24\chi}{12m(z + 2)} <0
\end{eqnarray*}
where the last inequality follows from (\ref{ineq8}).

If $c(u,v)\geq 2$ then $d(v)\geq d(u)\geq z+3$, and $f'(uv)\geq f(uv)\geq 3$. Thus
\begin{eqnarray*}
w(uv)&\leq& \frac2{z+3}+\frac13+\frac13-1 - \frac\chi m\\
&=& -\frac{(m + 3\chi)z - 3m + 9\chi}{3m(z + 3)} <0
\end{eqnarray*}
where the last inequality follows from (\ref{ineq9}).

Therefore $w(uv)<0$ for any edge $uv$ in $G$.
This contradicts Equation (\ref{eq:weight}).
\end{proof}

\begin{corollary}
Let $G$ be a connected graph embedded on a surface whose Euler characteristic $\chi$ is as large as possible. Suppose that $\chi\leq 0$ and $m=|E(G)|$.  Then

%\vskip3pt
%\begin{minipage}{.46\textwidth}
\begin{itemize}
%\item
%$b(G) \leq b'(G) \leq \Delta(G)+11$ if $m> -5\chi$,
%\item
%$b(G) \leq b'(G) \leq \Delta(G)+10$ if $m> -5.25\chi$,
%\item
%$b(G) \leq b'(G) \leq \Delta(G)+9$ if $m> -39\chi/7$,
\item
$b(G) \leq b'(G) \leq \Delta(G)+8$ if $m> -6\chi$,
\item
$b(G)\leq  b'(G) \leq \Delta(G)+7$ if $m> -6.6\chi$,
\item
$b(G)\leq b'(G) \leq \Delta(G)+6$ if $m> -7.5\chi$,
%\end{itemize}
%\end{minipage}
%\begin{minipage}{.46\textwidth}
%\begin{itemize}
\item
$b(G)\leq b'(G) \leq \Delta(G)+5$ if $m> -9\chi$,
\item
$b(G)\leq b'(G) \leq \Delta(G)+4$ if $m> -12\chi$,
\item
$b(G)\leq b'(G) \leq \Delta(G)+3$ if $m> -21\chi$.
\end{itemize}
%\end{minipage}
\end{corollary}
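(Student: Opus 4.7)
The plan is to apply Theorem~\ref{thm4} directly. It asserts $b(G) \leq b'(G) \leq \Delta(G) + \lfloor c \rfloor$ with $c = 3 - 18\chi/(m+3\chi)$ whenever $m > -3\chi$. Each hypothesis in the six cases has the form $m > -d\chi$ for some $d \in \{6,\, 6.6,\, 7.5,\, 9,\, 12,\, 21\}$, all of which exceed $3$; since $\chi \leq 0$, this gives $m > -d\chi \geq -3\chi$, so Theorem~\ref{thm4} applies throughout.

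The core step is to convert $m > -d\chi$ into a clean upper bound on $c$. First I would rewrite the hypothesis as $m + 3\chi > -(d-3)\chi$, where both sides are nonnegative (and strictly positive when $\chi < 0$) because $\chi \leq 0$ and $d > 3$. Dividing the nonnegative quantity $-18\chi$ through this inequality then yields
\[
\frac{-18\chi}{m + 3\chi} < \frac{18}{d-3},
\]
and hence $c < 3 + 18/(d-3)$. The values of $d$ in the statement are tailored so that $3 + 18/(d-3)$ is exactly an integer in $\{9, 8, 7, 6, 5, 4\}$, which forces $\lfloor c \rfloor \leq 8, 7, 6, 5, 4, 3$, respectively, and Theorem~\ref{thm4} then delivers the six claimed bounds.

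The degenerate case $\chi = 0$ deserves a brief comment: then $c = 3$ regardless of $m$, and all six conclusions hold (the strongest being exactly $\Delta(G) + 3$). I do not anticipate any real obstacle; the corollary is a direct arithmetic consequence of Theorem~\ref{thm4}, and the only mild subtlety is to verify the six identities $3 + 18/(d_i - 3) = k_i + 1$ that tune the floor function to drop to the claimed integer $k_i$.
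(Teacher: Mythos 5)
Your proposal is correct and matches the paper's route: the paper proves this corollary simply by citing Theorem~\ref{thm4}, and your derivation of $c<3+18/(d-3)$ from $m>-d\chi$ (with the $\chi=0$ case noted separately) is exactly the routine arithmetic that citation leaves implicit, with the six values $3+18/(d-3)=9,8,7,6,5,4$ checking out.
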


\begin{proof}
This follows immediately from the above theorem.
\end{proof}

%\section*{Acknowledgment}

\end{document}